\newtheorem{assumption}{Assumption}
\newtheorem{definition}{Definition}
\newtheorem{lemma}{Lemma}
\newtheorem{remark}{Remark}
\newtheorem{theorem}{Theorem}
\DeclarePairedDelimiter\abs{\lvert}{\rvert}%
\DeclarePairedDelimiter\norm{\lVert}{\rVert}%
\let\oldabs\abs
\def\abs{\@ifstar{\oldabs}{\oldabs*}}
\let\oldnorm\norm
\def\norm{\@ifstar{\oldnorm}{\oldnorm*}}
\newcommand{\tnorm}[1]{{\left\vert\kern-0.25ex\left\vert\kern-0.25ex\left\vert #1\right\vert\kern-0.25ex\right\vert\kern-0.25ex\right\vert}}
\newcommand{\tnormstar}[1]{\tnorm{#1}_*}
\DeclareRobustCommand{\hatj}{{\hat\jmath}}
\newcommand{\checkj}{{\check\jmath}}
\newcommand{\highToIntfPrimal}{\mathcal{P}_{\hatj}^{j}}
\newcommand{\lowToIntfPrimal}{\mathcal{P}_{\checkj}^{j}}
\newcommand{\intfToHighDual}{\mathcal{D}_{j}^{\hatj}}
\newcommand{\intfToLowDual}{\mathcal{D}_{j}^{\checkj}}
\newcommand{\highToIntfPrimalDiscrete}{\widetilde{\mathcal{P}}_{\hatj}^{j}}
\newcommand{\lowToIntfPrimalDiscrete}{\widetilde{\mathcal{P}}_{\checkj}^{j}}
\newcommand{\highToIntfDualDiscrete}{\widetilde{\mathcal{D}}_{\hatj}^{j}}
\newcommand{\lowToIntfDualDiscrete}{\widetilde{\mathcal{D}}_{\checkj}^{j}}
\newcommand{\intfToHighDualDiscrete}{\widetilde{\mathcal{D}}_{j}^{\hatj}}
\newcommand{\intfToLowDualDiscrete}{\widetilde{\mathcal{D}}_{j}^{\checkj}}
\newcommand{\highToIntfPrimalToAux}{\widetilde{\mathcal{P}}_{\hatj}^{j,\hatj}}
\newcommand{\highToIntfPrimalFromAux}{\widetilde{\mathcal{P}}_{j, \hatj}^{j}}
\newcommand{\lowToIntfPrimalToAux}{\widetilde{\mathcal{P}}_{\checkj}^{j,\checkj}}
\newcommand{\lowToIntfPrimalFromAux}{\widetilde{\mathcal{P}}_{j, \checkj}^{j}}
\newcommand{\highToIntfDualToAux}{\widetilde{\mathcal{D}}_{\hatj}^{j,\hatj}}
\newcommand{\highToIntfDualFromAux}{\widetilde{\mathcal{D}}_{j, \hatj}^{j}}
\newcommand{\lowToIntfDualToAux}{\widetilde{\mathcal{D}}_{\checkj}^{j, \checkj}}
\newcommand{\lowToIntfDualFromAux}{\widetilde{\mathcal{D}}_{j, \checkj}^{j}}
\newcommand{\intfToHighDualToAux}{\widetilde{\mathcal{D}}_{j}^{j, \hatj}}
\newcommand{\intfToHighDualFromAux}{\widetilde{\mathcal{D}}_{j,\hatj}^{\hatj}}
\newcommand{\intfToLowDualToAux}{\widetilde{\mathcal{D}}_{j}^{j, \checkj}}
\newcommand{\intfToLowDualFromAux}{\widetilde{\mathcal{D}}_{j,\checkj}^{\checkj}}
\newcommand{\hatS}{\hat{S}_i}
\newcommand{\checkS}{\check{S}_i}
\newcommand{\tinySum}{\textstyle{\sum}}
\newcommand{\localSubdomainLII}{L^2(\Omega_i)}
\newcommand{\localSubdomainLIIVector}{\mathbf{L}^2(\Omega_i)}
\newcommand{\localInterfaceLII}{L^2(\Gamma_j)}
\newcommand{\globalSubdomainLII}{L^2(\Omega)}
\newcommand{\globalSubdomainLIIVector}{\mathbf{L}^2(\Omega)}
\newcommand{\globalInterfaceLII}{L^2(\Gamma)}
\newcommand{\localHI}{H^1(\Omega_i)}
\newcommand{\localHIO}{H^1_0(\Omega_i)}
\newcommand{\globalHI}{H^1(\Omega)}
\newcommand{\globalHIO}{H^1_0(\Omega)}
\newcommand{\localHdiv}{\mathbf{H}(\mathrm{div},\Omega_i)}
\newcommand{\localV}{\mathbf{V}_i}
\newcommand{\localVO}{\mathbf{V}_{0,i}}
\newcommand{\localX}{\mathbf{X}_{i}}
\newcommand{\globalX}{\mathbf{X}}
\newcommand{\inner}[2]{{\left({#1},{#2}\right)}}
\newcommand{\Tau}{\mathcal{T}}
\newcommand{\subdomainGrid}[1]{\Tau_{\Omega_{#1}}}
\newcommand{\interfaceGrid}{\Tau_{\Gamma_{j}}}
\newcommand{\internalBoundaryGrid}[2]{\Tau_{\partial_{#1}\Omega_{#2}}}
\title{A posteriori error estimates for mixed-dimensional Darcy flow using non-matching grids}
\author{Jhabriel Varela\orcidlink{0000-0003-2220-2204}\footnote{Contact e-mail: \href{mailto:jhabriel@pol.una.py}{jhabriel@pol.una.py}}$^{~}$\footnote{Polytechnic School, National University of Asunción, Paraguay}$^{~}$\footnote{Polytechnic University Taiwan-Paraguay, Paraguay} \and Christian E. Schaerer\orcidlink{0000-0002-0587-7704}$^{\dagger}$ \and Eirik Keilegavlen\orcidlink{0000-0002-0333-9507}\footnote{Center for Modeling of Coupled Subsurface Dynamics, Department of Mathematics, University of Bergen, Norway}\and Inga Berre\orcidlink{0000-0002-0212-7959}$^\S$}
\date{\today}
\begin{document}

\maketitle
\begin{abstract}
\thispagestyle{empty}
In this article, we extend the \textit{a posteriori} error estimates for hierarchical mixed-dimensional elliptic equations developed in [Varela et al., \textit{J. Numer. Math.}, 48 (2023), pp. 247–280] to the setting of non-matching mixed-dimensional grids. The extension is achieved by introducing transfer grids between the planar subdomain and interface grids, together with stable discrete projection operators for primal (potential) and dual (flux) variables. The proposed non-matching estimators remain fully guaranteed and computable. Numerical experiments, including three-dimensional problems based on community benchmarks for incompressible Darcy flow in fractured porous media, demonstrate reliable performance of the estimators for the non-matching grids and effectivity that is comparable to the estimators for matching grids. \vspace{3mm} \\
\textbf{Keywords}: a posteriori error estimates, mixed-dimensional elliptic equations, non-matching grids, single-phase flow in fractured porous media\vspace{3mm} \\
\textbf{MSC Classification}: 65N15, 65N50, 76S05 \\
\end{abstract}
\newpage

\thispagestyle{empty}

\tableofcontents
\newpage


\section{Introduction\label{sec:intro}}

The scalar hierarchical mixed-dimensional (mD) elliptic equation extends the standard Poisson equation by incorporating the coupling between domains of co-dimension one \cite{boon2020functional}. This framework is particularly well suited for modeling flow processes in fractured porous media \cite{martin2005modeling, nordbotten2019unified}, where the rock matrix is represented as a 3D host domain, fractures as embedded 2D planar surfaces, fracture intersections as 1D line segments, and their intersections as 0D points. Over the past two decades, this model has been extensively studied, and a variety of techniques have been proposed to solve it; we refer the reader to the review articles \cite{berre2019flow, formaggia2021numerical} and community benchmarks \cite{flemisch2018benchmarks, flemisch2020verification} for comprehensive overviews. Notably, the model has been recently validated experimentally \cite{Both_Brattekås_Keilegavlen_Fernø_Nordbotten_2024}, further establishing its relevance and applicability.

Given that mD problems naturally span subdomains of different dimensions, it is both geometrically and computationally advantageous to discretize each subdomain with an independent grid, avoiding the constraints of matching grids. Previous works that have considered modeling and discretization of flow in fractured porous media on non-matching grids include \cite{frih2012modeling,  dangelo2012MFEM, fumagalli2013NonMaching, fumagalli2019dual, boon2020nonmatching}. In parallel, \textit{a posteriori} error estimates were also developed for problems in mixed dimensions. For instance, \cite{pencheva2013mortar} in the context of multi-scale mortar methods, \cite{chen2016adaptive, chen2017residual, mghazli2019fractured, hecht2019residual, zhao2022adaptive} for Darcy flow in fractured media, and more recently \cite{varela2023posteriori} for hierarchical mD elliptic problems in the context of Darcy flow in networks of planar fractures. To our knowledge, however, no existing estimators cover the fully general mD setting---incorporating intersecting and fully embedded  subdomains---on non-matching grids.

Motivated by filling this gap, we extend the estimates derived in \cite{varela2023posteriori} to the non-matching case. In our context, \textit{non-matching} means that elements in the higher-dimensional grid, the lower-dimensional grid, or both, do not geometrically align with the elements of the coupling interface grid (also known as a mortar grid). However, it is important to remark that all these grids are individually strictly non-overlapping and conforming. Figure~\ref{fig:non-matching-grids} illustrates this concept in two dimensions. Non-matching grids enable independent refinement or coarsening of subdomains and interfaces, potentially leading to significant computational savings. For instance, one may refine only the fracture grid in response to large local estimator values, while leaving the surrounding matrix grid unchanged. Such fully disjoint adaptivity avoids the technical complexity of enforcing geometric inter-dimensional conformity.

\begin{figure}
    \centering
    \includegraphics[width=\linewidth]{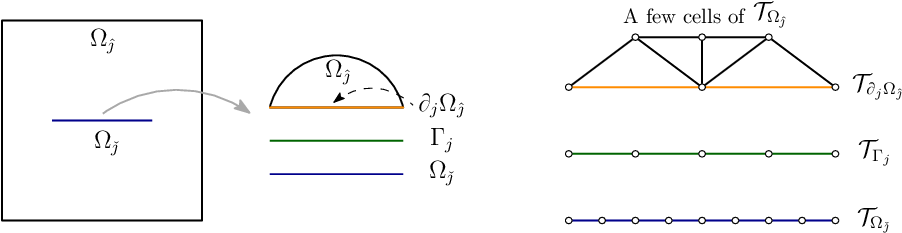}
    \caption{Mixed-dimensional geometric setting. Left: A fracture $\Omega_{\checkj}$ fully embedded in the matrix $\Omega_{\hatj}$. A close-up of the upper side of the fracture shows the interface $\Gamma_j$ coupling $\Omega_{\checkj}$ and $\Omega_{\hatj}$ through its internal boundary $\partial_j\Omega_{\hatj}$. There exists a similar coupling on the lower side of the fracture, not shown in the figures. Right: A non-matching coupling between $\internalBoundaryGrid{j}{\hatj}$, $\interfaceGrid$ and $\subdomainGrid{\checkj}$. Grids are formally defined in Section~\ref{sec:grid}.}
    \label{fig:non-matching-grids}
\end{figure}

Extending the validity of the \textit{a posteriori} error estimators from the matching to the general non-matching case is a non-trivial task, as one needs to introduce appropriate mappings of the primal (potentials) and dual (fluxes) variables between subdomains and interfaces while preserving functional conformity in a weak sense. In this work, we overcome this challenge by constructing a \emph{transfer grid} (common refinement grid between subdomains and interfaces), from which potentials and interface fluxes are projected in an energy-consistent manner.
This approach allows the estimates of \cite{varela2023posteriori} to carry over directly to the non-matching setting. Optimal generation of transfer grids, however, remains a challenging task \cite{gander2009algorithm, gander2013algorithm, mccoid2022provably}.

The remainder of the article is organized as follows. In Section~\ref{sec:md_model}, we present the mD strong formulations. Section~\ref{sec:fun_framework} introduces the functional framework, including local and mD spaces and continuous transfer operators. Section~\ref{sec:discrete_setting} covers the discrete setting, formally introducing the grids, discrete projection operators and finite-dimensional approximations. In Section~\ref{sec:error_estimation}, we first develop the abstract and then the fully computable \textit{a posteriori} error estimates. Finally, in Section~\ref{sec:numerics}, we present the numerical results and in Section~\ref{sec:conclusion} we draw our conclusions.

\section{Mixed-dimensional setting and the model problem\label{sec:md_model}}

In this section, we introduce the mD decomposition of the domain together with the strong forms of the model problem.

\subsection{Mixed-dimensional decomposition}

Following the notation introduced in \cite{varela2023posteriori}, we consider an mD domain $Y\subset\mathbb{R}^n$, $n\in \{2, 3\}$ decomposed into strictly disjoint and flat subdomains $\Omega_i$. Each $\Omega_i$ has a known dimension $d_i = d(i)$, where $0 \leq d_i \leq n$. There are $N_\Omega$ subdomains and we denote by $I=\{1, 2, \ldots, N_\Omega\}$ the index set of all subdomains. As shown in the left panel of Figure~\ref{fig:non-matching-grids}, interfaces $\Gamma_j$ establish the connection between two neighboring subdomains that are one dimension apart. Each interface has a known dimension $d_j = d(j)$, where $0 \leq d_j \leq n-1$. There are $N_\Gamma$ interfaces and the index set of interfaces is denoted by $J=\{1, 2, \ldots, N_\Gamma\}$. For a given interface $\Gamma_j$, we use the indices $\hatj \in I$ and $\checkj\in I$ to respectively identify higher- and lower-dimensional neighboring subdomains $\Omega_{\hatj}$ and $\Omega_{\checkj}$. The portion of the boundary of $\Omega_{\hatj}$ geometrically coinciding with $\Gamma_j$ is denoted by $\partial_j\Omega_{\hatj}$. We impose the following geometric compatibility conditions for the coupling triplet $(\partial_j\Omega_\hatj, \Gamma_j, \Omega_\checkj)$.

\begin{assumption}[Geometric compatibility of the coupling triplet] Let $j\in J$ and consider the coupling triplet
\begin{equation*}
    (\partial_j\Omega_{\hatj}, \Gamma_j, \Omega_\checkj),
\end{equation*}
where $\Gamma_j$ is the interface connecting $\Omega_\hatj$ and $\Omega_\checkj$.
\begin{enumerate}[label=(\roman*)]
    \vspace{1mm}
    \item $\Omega_i \subset \mathbb{R}^{n}$ is an open, bounded Lipschitz domain of intrinsic dimension $d_i\leq n$. Its boundary $\partial\Omega_i$ (when $d_i>0$) is a $(d_i-1)$-dimensional Lipschitz domain.
    \vspace{1mm}
    \item $\Gamma_j$ is a $d_j$-dimensional Lipschitz domain with $d_j=d_\checkj$, and there exist bi-Lipschitz homeomorphisms
    \begin{equation*}
        \Psi_{\hatj}^j: \partial_j\Omega_\hatj \to \Gamma_j, \qquad \Psi_{\checkj}^j:\Omega_\checkj \to \Gamma_j,
    \end{equation*}
    with uniformly bounded Lipschitz constants and inverses.
    \vspace{1mm}
    \item By means of these maps we identify $\partial_j\Omega_\hatj$, $\Gamma_j$, and $\Omega_\checkj$ as the same geometric domain up to bi-Lipschitz equivalence.
\end{enumerate}
\label{ass:isometry}
\end{assumption}

Given that several interfaces can be associated to a single subdomain, for all $i\in I$, we introduce the interface index sets $\hatS$ and $\checkS$ to establish their connections. The set $\hatS$ contains the indices of all higher-dimensional neighboring interfaces of the subdomain $\Omega_i$. Similarly, the set $\checkS$ contains the indices of all lower-dimensional neighboring interfaces of the subdomain $\Omega_i$. For example, in Figure~\ref{fig:md_full}, $\hat{S}_5=\{4, 3\}$ and $\check{S}_5=\{10\}$.

Recalling that $A\sqcup B$ denotes the disjoint union of the sets $A$ and $B$, we define the set of all subdomains by $\Omega = \sqcup_{i\in I} \Omega_i$ and the set of all interfaces by $\Gamma = \sqcup_{j\in J}\Gamma_j$. Thus, a complete disjoint partitioning of the mD domain $Y$ is given by $Y = \Omega\sqcup\Gamma$.

To finalize the geometric description, we define the boundary of $\Omega$ as $\partial\Omega = \partial_D\Omega \cup \partial_N\Omega \cup \partial_\mathcal{I}\Omega$, where $\partial_D\Omega = \cup_{i\in I}\partial_D\Omega_i$, $\partial_N\Omega = \cup_{i\in I}\partial_N\Omega_i$ and $\partial_\mathcal{I}\Omega=\cup_{i\in I} \cup_{j\in\hatS} \partial_{j}\Omega_i$ are respectively the Dirichlet, Neumann and internal portions of $\partial\Omega$, assumed to be strictly non-overlapping. Since the problem we are interested in is purely elliptic, we require $\partial_D\Omega \neq \emptyset$. A full geometric decomposition of an mD domain $Y\subset \mathbb{R}^2$ is shown schematically in Figure~\ref{fig:md_full}.

\begin{figure}
    \centering
    \includegraphics[width=0.98\textwidth]{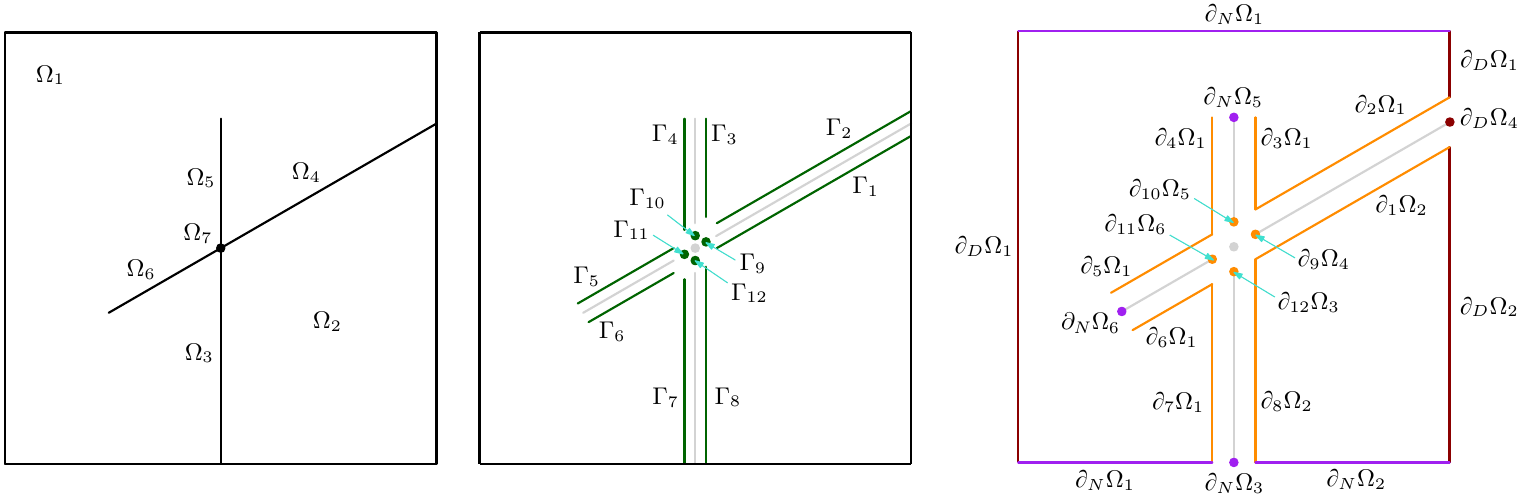}
    \caption{Complete geometric decomposition of an mD domain. Left: Subdomains. Two 2D subdomains ($\Omega_1$ and $\Omega_2$) hosting four 1D fractures ($\Omega_3$, $\Omega_4$, $\Omega_5$, $\Omega_6$) that intersect at a common 0D point ($\Omega_7$). Center: Interfaces. Twelve interfaces coupling neighboring subdomains of co-dimension one. Right: Internal and external boundary conditions.}
    \label{fig:md_full}
\end{figure}

\subsection{The model problem}

We now present the mD elliptic problem in strong form, in the context of incompressible single phase flow in fractured porous media. For the details, we refer to \cite{martin2005modeling, boon2018robust, nordbotten2019unified}.

\begin{definition}[Strong primal form] Let $i\in I$ and $j\in J$. Then, find each pressure \(p_i : \Omega_i \to \mathbb{R}\), such that
\begin{subequations}
\begin{alignat}{2}
    &-\nabla_i\cdot\left(\mathcal{K}_i\nabla_ip_i\right) + \sum_{j\in\hatS}  \kappa_j\left(p_{\checkj} - p_{\hatj}\right) = f_i && \qquad \mathrm{in}~\Omega_i, \\
    & \left(\mathcal{K}_{\hatj}\nabla_{\hatj}p_{\hatj}\right) \cdot \bm{n}_{\hatj} = \kappa_j \left(p_{\checkj} - p_{\hatj}\right) && \qquad \mathrm{on}~\partial_j\Omega_{\hatj}, \\
    &\left(\mathcal{K}_{i}\nabla_{i}p_{i}\right) \cdot \bm{n}_{i} = 0 && \qquad \mathrm{on}~ \partial_N\Omega_i, \\
    & p_i = g_{D, i} && \qquad \mathrm{on}~\partial_D\Omega_i.
\end{alignat}
\label{def:strongPrimalForm}
\end{subequations}
\end{definition}

Introducing the subdomain Darcy (tangential) flux $\bm{u}_i = -\mathcal{K}_i\nabla_ip_i$ and the interface (normal) flux $\lambda_j = -\kappa_j(p_{\checkj} - p_{\hatj})$, the strong primal form from Definition~\ref{def:strongPrimalForm} can be recast into the following dual problem.

\begin{definition}[Strong dual form]  
Let \(i \in I\) and \(j \in J\). Then, find each Darcy flux \(\bm{u}_i : \Omega_i \to \mathbb{R}^{d_i}\), interface flux \(\lambda_j : \Gamma_j \to \mathbb{R}\), and pressure \(p_i : \Omega_i \to \mathbb{R}\), such that
\begin{subequations}
    \begin{alignat}{2}
    &\nabla_i \cdot \bm{u}_i - \sum_{j \in \hatS} \lambda_j = f_i &&\qquad \mathrm{in}~\Omega_i, \label{eq:mass_conservation_dual_strong}\\
    &\bm{u}_i = -\mathcal{K}_i \nabla_i p_i &&\qquad \mathrm{in}~\Omega_i, \label{eq:tangential_darcy_law_dual_strong}\\
    &\lambda_j = -\kappa_j (p_{\checkj} - p_{\hatj}) &&\qquad \mathrm{on}~\Gamma_j, \label{eq:normal_darcy_law_dual_strong} \\
    &\bm{u}_{\hatj} \cdot \bm{n}_{\hatj} = \lambda_j &&\qquad \mathrm{on}~\partial_j \Omega_{\hatj}, \\
    &\bm{u}_i \cdot \bm{n}_i = 0 &&\qquad \mathrm{on}~\partial_N \Omega_i, \\
    &p_i = g_{D,i} &&\qquad \mathrm{on}~\partial_D \Omega_i.
\end{alignat}
\label{def:strongDualForm}
\end{subequations}
\end{definition}

In Definitions~\ref{def:strongPrimalForm} and \ref{def:strongDualForm}, $\nabla_i$ is the $d_i$-dimensional \textit{nabla} operator, $\bm{n}_i$ the outward-pointing normal vector on a given Neumann boundary $\partial_N\Omega_i$ or internal boundary $\partial_\mathcal{I}\Omega_i$, \(f_i : \Omega_i \to \mathbb{R}\) is a source term, and $g_{D,i} : \partial_D \Omega_i \to \mathbb{R}$ is the Dirichlet boundary data. Finally, $\mathcal{K}_i : \Omega_i \to \mathbb{R}^{d_i \times d_i}$ is the effective permeability matrix and \(\kappa_j : \Gamma_j \to \mathbb{R}\) is the effective normal permeability. For the physical interpretation of the model, the reader is referred to \cite{nordbotten2019unified, keilegavlen2021porepy, varela2023posteriori} and the references therein.

\begin{remark}
    The dual form provides a natural framework for coupling non-matching grids, since the introduction of the interface flux $\lambda_j$
  naturally decouples subdomain and interface contributions while strongly imposing inter-dimensional flux continuity.
\end{remark}

\section{Functional framework\label{sec:fun_framework}}

We now define the functional spaces required to formulate the weak versions of Definitions~\ref{def:strongPrimalForm} and \ref{def:strongDualForm}, respectively. From this point onward, we shall use $\nabla_i(\cdot)$ and $\nabla_i\cdot(\cdot)$ to denote the $d_i$-dimensional weak gradient and divergence operators, respectively. 

\subsection{Local and global spaces}

Let $S$ denote an open and bounded domain with boundaries satisfying Assumption~\ref{ass:isometry} when applicable. We use $L^2(S)$ to denote the space of square-integrable functions on $S$, equipped with inner product $\inner{\cdot}{\cdot}_S \equiv \inner{\cdot}{\cdot}_{L^2(S)}$ and norm $\norm{\cdot}_S \equiv \norm{\cdot}_{L^2(S)}$. We also introduce the space of square-integrable vector-valued functions on $S$, that is $\mathbf{L}^2(S) = [L^2(S)]^{d(S)}$, with $d(S)$ denoting the intrinsic dimension of $S$. For the sake of compactness we also use $ \inner{\cdot}{\cdot}_S \equiv \inner{\cdot}{\cdot}_{\mathbf{L}^2(S)}$ and $\norm{\cdot}_S \equiv \norm{\cdot}_{\mathbf{L}^2(S)}$ in the vector case.

The standard energy space is defined as $\localHI = \{q_i \in \localSubdomainLII : \nabla_i q_i \in \localSubdomainLIIVector\}$ while $\localHIO = \{q_i \in \localHI : q_i\rvert_{\partial_D\Omega_i} = 0\}$ denotes the subspace of $H^1(\Omega_i)$ of functions with vanishing traces on Dirichlet boundaries. Additionally, we let $g_i \in \localHI$ represent any function whose restriction on $\partial_D\Omega_i$ matches the boundary data, i.e., $g_i\rvert_{\partial_D\Omega_i} = g_{D,i}$. Here, $(~\cdot~)\rvert_{\partial_D\Omega_i} : \localHI\to H^{1/2}(\partial_D\Omega_i)$ is the trace operator with $H^{1/2}(\partial_D\Omega_i)$ denoting the fractional Sobolev space on the Dirichlet boundary. Although implicit, it is important to mention that the traces on the internal boundaries also belong to the space $H^{1/2}(\partial_{\mathcal{I}}\Omega_i)$.  

We now turn our focus to the flux spaces. We define the space $ \localHdiv = \{\bm{v}_i \in \localSubdomainLIIVector : \nabla_i \cdot \bm{v}_i \in \localSubdomainLII\}$ alongside the subspace 
$ \localV = \{\bm{v}_i \in \localHdiv : (\bm{v}_i \cdot \bm{n}_i)\rvert_{\partial_N\Omega_i} = 0\}$, which enforces vanishing traces on Neumann boundaries. Similarly, the space $\localVO = \{\bm{v}_i \in \localV : (\bm{v}_i \cdot \bm{n}_i)\rvert_{\partial_\mathcal{I}\Omega_i} = 0\}$
consists of functions with vanishing traces on both Neumann and internal boundaries. To handle internal boundaries, following \cite{boon2018robust}, we introduce the linear extension operator
$\mathcal{R}_j: \localInterfaceLII \to \mathbf{V}_{\hatj}$, satisfying for any $\nu_j \in \localInterfaceLII$:
\begin{equation}
(\mathcal{R}_j \nu_j)\rvert_{\partial_j\Omega_{\hatj}} \cdot \bm{n}_{\hatj} = 
\begin{cases}
    \nu_j & \text{on } \partial_j\Omega_{\hatj}, \\
    0 & \partial\Omega_\hatj \setminus \partial_j\Omega_\hatj
\end{cases}.
\end{equation}
An explicit form for $\mathcal{R}_j$ can be obtained by solving an auxiliary elliptic problem~\cite{boon2018robust}. We shall revisit this operator at the end of Section~\ref{sec:continuous_transfers}. The space $\localVO$, together with the linear extension operator $\mathcal{R}_j$, allow us to represent, in a weak sense,  subdomain fluxes $\bm{v}_i\in \localV$ with $(\bm{v}_i \cdot \bm{n}_i)\rvert_{\partial_j\Omega_{\hatj}} = \nu_j$ for all $j\in\checkS$, as $\bm{v}_i = \bm{v}_{0,i} + \tinySum_{j\in \checkS} \mathcal{R}_j\nu_j$, with $\bm{v}_{0, i} \in \localV$ and $\nu_j \in \localInterfaceLII$.
The above decomposition suggests the following local space for subdomain fluxes
\begin{equation}
    \localX := \localVO \times \prod_{j\in\checkS} \mathcal{R}_j \localInterfaceLII.
\end{equation}

\begin{remark}[$\localX$ vs. $\localV$] The space $\localX$ has higher regularity compared to $\localV$, as this latter does not contain $L^2$ traces on internal boundaries.  This enhanced regularity on internal boundaries allows the construction of pure $L^2$-type projections on coupling triplets in the case of weak dual approximations.
\end{remark}

Global spaces are now immediate by taking the Cartesian product of local spaces,
\begin{equation}
    {L^2}(\Omega) := \prod_{i\in I} L^2(\Omega_i), \qquad L^2(\Gamma) := \prod_{j\in J} L^2(\Gamma_j).
\end{equation}
Analogous definitions hold for $\mathbf{L}^2(\Omega)$, $H^1(\Omega)$, $H^1_0(\Omega)$, $H^{1/2}(\partial_D\Omega)$, $\mathbf{V}$, $\mathbf{V}_{0}$, and $\mathbf{X}$. We will later use these spaces to represent weak solutions to the model problem in the mD partition $\Omega \sqcup \Gamma$.

\subsection{Continuous transfer operators\label{sec:continuous_transfers}}

Let $j\in J$ and consider the coupling triplet $(\partial_j\Omega_{\hatj},\Gamma_j,\Omega_{\checkj})$.
Recall that by Assumption~\ref{ass:isometry}, the three sets represent the same geometric domain.

\begin{definition}[Continuous transfer operators]
We define bounded linear transfers acting as canonical identifications or embeddings:
\begin{enumerate}[label=(\roman*)]
    \item Transfers between $\partial_j\Omega_\hatj$ and $\Gamma_j$. Define 
    \begin{align*}
    \hspace{-6mm}\mathcal P_{\hatj}^j : H^{1/2}(\partial_j\Omega_{\hatj}) \to H^{1/2}(\Gamma_j), 
    ~~\mathcal D_{\hatj}^j : L^2(\partial_j\Omega_{\hatj}) \to L^2(\Gamma_j), ~~
    \mathcal D_{j}^{\hatj} : L^2(\Gamma_j) \to L^2(\partial_j\Omega_{\hatj}),
    \end{align*}
    where $\mathcal{P}_{\hatj}^j$ is an $H^{1/2}$-identification and $\mathcal{D}_\hatj^j$ and $\mathcal{D}_j^{\hatj}$ are $L^2$-identifications.
    
    \item Transfers between $\Omega_\checkj$ and $\Gamma_j$. Define
    \begin{align*}
\mathcal P_{\checkj}^j : H^1(\Omega_{\checkj}) \to H^{1/2}(\Gamma_j), ~~ 
\mathcal D_{\checkj}^j : L^2(\Omega_{\checkj}) \to L^2(\Gamma_j), ~~
\mathcal D_{j}^{\checkj} : L^2(\Gamma_j) \to L^2(\Omega_{\checkj}),
\end{align*}
where $\mathcal P_{\checkj}^j$ is the standard embedding $H^1(\Omega_{\checkj})\hookrightarrow H^{1/2}(\Gamma_j)$ on the same domain and $\mathcal{D}_\checkj^j$ and $\mathcal{D}_j^\checkj$ are $L^2$-identifications.
\end{enumerate}
\end{definition}

\begin{lemma}[Basic properties of transfer operators]
\label{lem:cont_transfers_basic}
For all admissible arguments:
\begin{enumerate}[label=(\roman*)]
    \item $L^2$ norm preservation under identification:
   \[\hspace{-6mm} \|\mathcal{D}_{\hatj}^j w \|_{\Gamma_j}\hspace{-1mm}=\|w\|_{\partial_j\Omega_{\hatj}},\quad
    \|\mathcal D_{j}^{\hatj} \nu\|_{\partial_j\Omega_{\hatj}}\hspace{-1mm}=\|\nu\|_{\Gamma_j},
    \quad
    \|\mathcal D_{\checkj}^j w\|_{\Gamma_j}\hspace{-1mm}=\|w\|_{\Omega_{\checkj}},\quad
    \|\mathcal D_{j}^{\checkj} \nu\|_{\Omega_{\checkj}}\hspace{-1mm}=\|\nu\|_{\Gamma_j}.
    \]
    \item $H^{1/2}$ norm preservation under identification:
    \[
    \|\mathcal P_{\hatj}^j q\|_{H^{1/2}(\Gamma_j)}
    =\|q\|_{H^{1/2}\left(\partial_j\Omega_{\hatj}\right)}.
    \]
    \item Sobolev embedding on the same domain:
    \[\|\mathcal P_{\checkj}^j q\|_{H^{1/2}(\Gamma_j)} \le
    C_{\mathrm{emb}}\,\|q\|_{H^1\left(\Omega_{\checkj}\right)}.
    \]
    Here, $\norm{\cdot}_{H^1(S)}$ and $\norm{\cdot}_{H^{1/2}(S)}$
are the usual Sobolev norms on the domain $S$ and $C_{\mathrm{emb}}$ is the embedding constant, cf. \cite{evans2022partial}.
\end{enumerate}
\end{lemma}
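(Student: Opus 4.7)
The plan is to reduce all three statements to standard facts on a single reference geometric object, leveraging Assumption~\ref{ass:isometry}, which identifies $\partial_j\Omega_{\hatj}$, $\Gamma_j$, and $\Omega_{\checkj}$ as one and the same domain up to bi-Lipschitz equivalence. In the flat, planar setting adopted throughout the paper, one may take the homeomorphisms $\Psi_{\hatj}^j$ and $\Psi_{\checkj}^j$ to be the identity on the common geometric carrier inherited from $\mathbb{R}^n$. Consequently, the transfer operators act as canonical identity maps, transporting surface and volume measures one-to-one between the three labels with unit Jacobian. This observation is the mechanism that converts the abstract ``identifications'' in the definition of the operators into genuine isometries.

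For part (i), I would invoke the change-of-variables formula for the pullback through the identifications. Concretely, for any $w \in L^2(\partial_j\Omega_{\hatj})$, $\mathcal{D}_{\hatj}^j w = w \circ (\Psi_{\hatj}^j)^{-1}$ has squared norm $\int_{\Gamma_j} |w\circ(\Psi_{\hatj}^j)^{-1}|^2$, which by change of variables equals $\int_{\partial_j\Omega_{\hatj}} |w|^2 = \|w\|_{\partial_j\Omega_{\hatj}}^2$, since the identification carries the surface measure on $\partial_j\Omega_{\hatj}$ to the surface measure on $\Gamma_j$. The remaining three $L^2$ equalities follow symmetrically by swapping the roles of source and target. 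For (ii), the $H^{1/2}$ norm is intrinsic to the geometric domain through the Slobodeckij seminorm; since $\Psi_{\hatj}^j$ preserves both distances and measures under the identification, the Gagliardo double integral is preserved pointwise, which combined with (i) yields the stated equality.

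For part (iii), I would appeal to the classical Sobolev embedding $H^1(S) \hookrightarrow H^{1/2}(S)$ on any bounded Lipschitz domain $S$ (see \cite{evans2022partial}), applied with $S = \Omega_{\checkj}$, and then transport the resulting estimate through the identification $\mathcal{P}_{\checkj}^j : H^1(\Omega_{\checkj}) \to H^{1/2}(\Gamma_j)$ by the same pullback argument as in (ii). The constant $C_{\mathrm{emb}}$ is then the embedding constant of the classical result and depends only on the geometry of $\Omega_{\checkj}$. The main subtlety of the proof is making the notion of ``canonical identification'' precise: strict equality of norms in (i) and (ii) requires the identification to be measure-preserving, which is automatic in the flat setting of interest but would otherwise degrade to norm equivalences with constants depending on the bi-Lipschitz moduli of $\Psi_{\hatj}^j$ and $\Psi_{\checkj}^j$. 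A brief remark clarifying that the identity choice is admissible under Assumption~\ref{ass:isometry} in the flat case suffices to make the argument rigorous.
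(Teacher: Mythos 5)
Your proof is correct and follows the same reasoning the paper relies on: the lemma is stated without an explicit proof precisely because the operators are defined as canonical identifications on the common geometric carrier of Assumption~\ref{ass:isometry}, so the $L^2$ and $H^{1/2}$ equalities reduce to change of variables with unit Jacobian and part (iii) to the standard embedding $H^1(\Omega_{\checkj})\hookrightarrow H^{1/2}(\Gamma_j)$ on the same flat domain. Your closing caveat---that strict norm \emph{equality} requires the identifications to be measure-preserving, which holds in the flat setting but would otherwise degrade to equivalences with constants controlled by the bi-Lipschitz moduli of $\Psi_{\hatj}^j$ and $\Psi_{\checkj}^j$---is a worthwhile precision that the paper leaves implicit.
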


\begin{remark}[Continuous transfers vs. discrete projections]
The operators $\mathcal P$ and $\mathcal D$ above act as canonical identifications on the common geometry (with norms associated to the target space). They are introduced only to keep notation aligned with the \emph{discrete} non-matching case, where genuine projections $\widetilde{\mathcal P}$ and $\widetilde{\mathcal D}$ are required.
\end{remark}

With the interface to internal-boundary $L^2$-transfer $\mathcal D_{j}^{\hatj}(\cdot)$ defined, the subdomain extension can be written as the following composition:
\begin{equation}
\mathcal R_j = \mathcal R_j^* \circ \mathcal D_{j}^{\hat\jmath} : L^2(\Gamma_j)\to L^2(\partial_j\Omega_{\hat\jmath})\to \mathbf V_{\hat\jmath},
\end{equation}
where $\mathcal R_j^*$ is any bounded lifting from the internal boundary into $\mathbf V_{\hat\jmath}$ (cf.\ \cite{boon2018robust}).

\subsection{Weak forms\label{sec:weak_forms}}

Before stating the weak primal and dual forms of Definitions~\ref{def:strongPrimalForm} and \ref{def:strongDualForm}, we make the data regularity explicit in the following assumption. Here and in the following, we shall use the notation $[~\cdot~]$ to denote an ordered collection of subdomain and/or interface variables, depending on context.  

\begin{assumption}[Data regularity\label{ass:data}] ~~~
   \begin{enumerate}[label=(\roman*)]
       \vspace{2mm}
       \item $f = [f_i]\in L^2(\Omega)$.
       \vspace{2mm}
       \item $g_D = [g_{D, i}] \in H^{1/2}(\partial\Omega)$.
       \vspace{2mm}
       \item For all $i\in I$, $\mathcal{K}_i$ is SPD and satisfies the usual ellipticity condition
        \begin{equation*}
            c_i^2 \abs{\bm{\xi}}^2 \leq \mathcal{K}_i\bm{\xi}\cdot\bm{\xi} \leq C_i^2 \abs{\bm{\xi}}^2, \qquad\forall\bm{\xi}\in\mathbb{R}^{d_i}.
       \end{equation*}
       \item For all $j\in J$, $\kappa_j$ is non-degenerate with bounds
        \begin{equation*}
         0 < \underline{\kappa}_j \leq \kappa_j \leq \overline{\kappa} _j< \infty.   
        \end{equation*}
   \end{enumerate} 
\end{assumption}

\begin{definition}[Weak primal form] Let $p=[p_i]$ and $g = [g_i] \in \globalHI$. Then, given $f = [f_i]\in \globalSubdomainLII$, find $p \in H^1_0(\Omega) + g$ such that
\begin{align}
    &\sum_{i\in I} \left(\mathcal{K}_i\nabla_ip_i, \nabla_iq_i\right)_{\Omega_i} + \sum_{j\in J}\left(\kappa_j( \lowToIntfPrimal p_{\checkj} - \highToIntfPrimal p_{\hatj}\rvert_{\partial_j\Omega_{\hatj}}), \lowToIntfPrimal q_{\checkj} - \highToIntfPrimal q_{\hatj}\rvert_{\partial_j\Omega_{\hatj}}\right)_{\Gamma_j} \nonumber \\
    &\quad=\sum_{i\in I} \left(f_i, q_i\right)_{\Omega_i} \qquad \forall q=[q_i]\in H^1_0(\Omega).
\end{align}
\label{def:primalWeakForm}
\end{definition}

\begin{remark}[Well-posedness of the primal form]
    The bilinear form associated with the primal weak form is symmetric and, provided Assumption~\ref{ass:data} holds, coercive on $H^1_0(\Omega)$. Consequently, the problem is well-posed and equivalent to finding the unique minimizer $p \in H^1_0(\Omega) + g$ of the energy functional:
    \begin{equation*}
        \mathcal{J}(q) = \frac{1}{2}\sum_{i\in I} \norm{\mathcal{K}_i^{1/2}\nabla_i q_i}^2_{\Omega_i} 
        + \frac{1}{2}\sum_{j\in J} \norm{\kappa_j^{1/2} (\lowToIntfPrimal q_{\checkj} - \highToIntfPrimal q_{\hatj}\rvert_{\partial_j\Omega_\hatj})}^2_{\Gamma_j} 
        - \sum_{i\in I} (f_i, q_i)_{\Omega_i}.
    \end{equation*}
\end{remark}

\begin{definition}[Dual weak form]  Let $\bm{u} = [\bm{u}_{0, i}, \lambda_j]$ and $p = [p_i]$. Then, given $f=[f_i]\in \globalSubdomainLII$ and $g_D = [g_{D,i}] \in H^{1/2}(\partial_D\Omega)$, find $\bm{u} \in \mathbf{X}$ and $p\in\globalSubdomainLII$ such that

\begin{subequations}
    \begin{align}
    &\sum_{i\in I} \inner{\mathcal{K}^{-1}_i\left(\bm{u}_{0,i} + \tinySum_{j\in\checkS}\mathcal{R}^*_j\circ \intfToHighDual\lambda_j\right)}{\bm{v}_{0,i} + \tinySum_{j\in\checkS}\mathcal{R}^*_j\circ \intfToHighDual\nu_j}_{\Omega_i} \nonumber \\
    &~~- \sum_{i\in I} \inner{p_i}{\nabla_i\cdot\left(\bm{v}_{0,i}+\tinySum_{j\in\checkS}\mathcal{R}^*_j\circ \intfToHighDual\nu_j\right) + \tinySum_{j\in\hatS}\intfToLowDual\nu_j}_{\Omega_i} +\sum_{j\in J} \inner{\kappa_j^{-1}\lambda_j}{\nu_j}_{\Gamma_j} \nonumber \\
    &~~= -\sum_{i\in I} \inner{g_{D,i}}{\left(\bm{v}_{0,i} \cdot \bm{n}_i\right)\rvert_{\partial_D\Omega_{i}}}_{\partial_D\Omega_i} \qquad \forall\bm{v}=[\bm{v}_{0, i}, \nu_j] \in \mathbf{X}. \\
    &\sum_{i\in I} \inner{\nabla_i\cdot\left(\bm{u}_{0,i} + \tinySum_{j\in\checkS}\mathcal{R}^*_j\circ \intfToHighDual\lambda_j\right)}{q_i}_{\Omega_i} - \sum_{i\in I} \inner{\tinySum_{j\in\hatS}\intfToLowDual\lambda_j}{q_i}_{\Omega_i} \nonumber \\
    &~~=\sum_{i\in I} \inner{f_i}{q_i}_{\Omega_i}\qquad \forall q=[q_i]\in\globalSubdomainLII.
\end{align}
\end{subequations}
\label{def:dualWeakForm}
\end{definition}

\begin{remark}[Well-posedness of the dual form] The dual form exhibits a saddle-point structure. Well-posedness can be proven using inf-sup arguments~\cite{boon2018robust}, provided the data satisfies Assumption~\ref{ass:data}. 
\end{remark}

Later, we shall employ the energy norm to measure the \textit{a posteriori error estimates}. For an mD potential function $q=[q_i]\in\globalHIO$ we define
\begin{equation}
    \tnorm{q}^2 = \sum_{i\in I} \norm{\mathcal{K}_i^{1/2}\nabla_iq_i}_{\Omega_i}^2 + \sum_{j\in J}\norm{\kappa_j^{1/2}\left(\lowToIntfPrimal q_{\checkj} - \highToIntfPrimal q_{\hatj}\rvert_{\partial_j\Omega_{\hatj}}\right)}_{\Gamma_j}^2.
\end{equation}

For an mD flux function $\bm{v}=[\bm{v}_{0,i}, \nu_j] \in \globalSubdomainLIIVector\times\globalInterfaceLII$, we define the energy norm
\begin{equation}
    \tnormstar{\bm{v}}^2 = \sum_{i\in I} \norm{\mathcal{K}_i^{-1/2}\left(\bm{v}_{0,i}+\tinySum_{j\in\checkS}\mathcal{R}_j^*\circ \intfToHighDual\nu_j\right)}_{\Omega_i}^2 + \sum_{j\in J} \norm{\kappa_j^{-1/2}\nu_j}^2_{\Gamma_j}.
\end{equation}

We will use both norms to present the guaranteed upper bounds on the primal (the mD potential) and dual  (the mD flux) variables.

\section{Discrete setting \label{sec:discrete_setting}}

In this section, we formally introduce the grids and define the discrete projection operators. From there, we write the approximated primal and dual problems and then establish the connection between continuous and broken norms.

\subsection{Grid definitions \label{sec:grid}}

For all $i\in I$, we denote by $\subdomainGrid{i}$ a simplicial and strictly non-overlapping partition of $\Omega_i$, such that
\begin{equation*}
    \overline{\Omega_i} = \bigcup_{K\in\subdomainGrid{i}} K, \qquad \mathrm{dim}(K)\in \{0, 1, 2,3\}.
\end{equation*}
Similarly, for all $j\in J$, we denote by $\interfaceGrid$ a simplicial and strictly non-overlapping partition of $\Gamma_j$, such that
\begin{equation*}
    \overline{\Gamma_j} = \bigcup_{K\in\interfaceGrid} K, \qquad \mathrm{dim}(K)\in \{0, 1, 2\}.
\end{equation*}
Let $e$ denote a face from the set of faces $\mathcal{E}_K$ of an element $K\in \subdomainGrid{i}$. For a fixed $j\in J$, we define the internal boundary grid as
\begin{equation*}
     \internalBoundaryGrid{j}{\hatj} := 
     \left(\bigcup_{K\in\subdomainGrid{\hatj}} \bigcup_{e\in\mathcal{E}_K}e\right)
     \bigcap \partial_{j}{\Omega_{\hatj}}, \qquad \overline{\partial_{j}\Omega_{\hatj}} = \bigcup_{K\in \internalBoundaryGrid{j}{\hatj}} K, \qquad \text{dim}(K)\in\{0,1,2\}.
     \label{eq:internalBoundaryGrid}
\end{equation*}
As illustrated in the right panel of Figure~\ref{fig:non-matching-grids}, we allow the grid triplet $(\internalBoundaryGrid{j}{\hatj},\interfaceGrid, \subdomainGrid{i})$ to be non-matching. When necessary, we shall use $K_{X}$ to denote an element of any admissible grid $\mathcal{T}_X$.

To ensure the stability of the discrete projection operators that will be constructed next, we require all partitions of subdomains and interfaces to be shape-regular.

\begin{assumption}[Shape-regularity of mixed-dimensional meshes]
There exists $\vartheta>0$ such that every simplex $K$ in each mesh
$\subdomainGrid{i}$, $\interfaceGrid$, $\internalBoundaryGrid{j}{\hatj}$
satisfies $h_K/\rho_K \le \vartheta$, where $h_K$ is the diameter and $\rho_K$
the inradius of $K$. All meshes are conforming on their respective domains.
\end{assumption}

Before defining the discrete projection operators, we need to introduce the \emph{transfer grid} between subdomain and interface grids. This grid facilitates the coupling of non-matching grids by resolving the geometric mismatch between them. We focus our exposition on the 3D case (2D interface grid) as the 2D case (1D interface grid) involves fewer geometric complexities. 

\begin{definition}[Transfer grid]
Let $j\in J$ and $(\internalBoundaryGrid{j}{\hatj}, \interfaceGrid, \subdomainGrid{\checkj})$ be a coupling grid triplet. We construct the transfer grid $\Tau_{\Gamma_j, X}$ with  $X\in \{\partial_j\Omega_{\hatj}, \Omega_{\checkj}\}$ via a two-step process. First, identify the set of nodes $\mathcal{V}_{\Gamma_j, X} = \mathcal{V}_{\Gamma_j} \bigcup \mathcal{V}_{X}$ of the transfer grid, with $\mathcal{V}_{\Gamma_j}$ and $\mathcal{V}_{X}$ denoting the set of nodes of $\Tau_{\Gamma_j}$ and $\Tau_{X}$, respectively. Next, for each pair of elements $K_{\Gamma_j}$ and $K_X$, define
\begin{equation}
    S_{\Gamma_j, X} := K_{\Gamma_j} \bigcap K_{X}.
\end{equation}
The above intersections result in one of the following three cases:
\begin{enumerate}[label=(\roman*)]
    \vspace{2mm}
    \item \textit{Polygonal domain}. If  $S_{\Gamma_j, X}$ is a triangle, then $K_{\Gamma_j, X} := S_{\Gamma_j, X}$, and we  include it directly in $\Tau_{\Gamma_j, X}$.  Otherwise, $S_{\Gamma_j, X}$ is a non-simplex macro-element and it must be triangulated into simplicial elements.
    \vspace{2mm}
    \item \textit{Line segment}. If $S_{\Gamma_j, X}$ is a line-segment, it represents a shared boundary between two elements and does not contribute as an element to $\Tau_{\Gamma_j, X}$. 
    \vspace{2mm}
    \item \textit{Empty intersection}. If $S_{\Gamma_j, X}$ is void, we ignore it.
\end{enumerate}
\label{def:auxiliary_grid}
\end{definition}

Figure~\ref{fig:intersection_grids} shows examples of 1D and 2D transfer grids. To finalize this section, we summarize the geometric properties of the transfer grid.

\begin{remark}[Properties of the transfer grid] The transfer grid $\Tau_{\Gamma_j, X}$ has the properties of node inclusion, refinement, dimensionality, and partitioning:
\begin{enumerate}[label=(\roman*)]
    \vspace{2mm}
    \item $\mathcal{V}_{\Gamma_j} \subseteq \mathcal{V}_{\Gamma_j, X}$, $\mathcal{V}_X\subseteq \mathcal{V}_{\Gamma_j, X}$.
    \vspace{2mm}
    \item $\abs{\interfaceGrid}\leq \abs{\Tau_{\Gamma_j, X}}$, $\abs{\Tau_{X}}\leq \abs{\Tau_{\Gamma_j, X}}$.
    \vspace{2mm}
    \item For all $K\in\Tau_{\Gamma_j, X}, ~ \dim(K)=d_j$.
    \vspace{2mm}
    \item By Assumption~\ref{ass:isometry},  $\overline{\Gamma_j} = \bigcup_{K\in\Tau_{\Gamma_j, X}} K$,  $\overline{X} = \bigcup_{K\in\Tau_{\Gamma_j, X}} K$.  
\end{enumerate}   
\end{remark}

\begin{figure}
    \centering
    \includegraphics[width=0.85\textwidth]{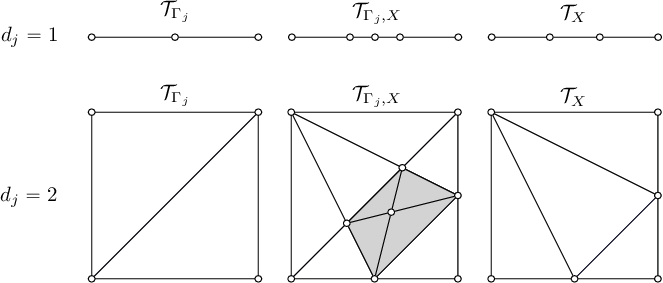}
    \caption{Transfer grids $\Tau_{\Gamma_j, X}$ coupling interface grids $\Tau_{\Gamma_j}$ with $\Tau_{X}$, $X\in\{\partial_j\Omega_{\hatj},\Omega_{\checkj}\}$. Top: 1D coupling. Bottom: 2D coupling. The shaded quadrilateral corresponds to a macro-element $S_{\Gamma_j, X}$ that was further refined into four simplices using its barycenter coordinate.}
    \label{fig:intersection_grids}
\end{figure}

\subsection{Discrete projection operators\label{sec:discrete_projection_operators}}

To define the discrete projection operators, we shall make extensive use of the broken spaces of polynomials. For $k\ge 0$ and any mesh $\Tau_X$ on a geometric set $X$, we define 
\begin{equation}
\mathbb{P}_k(\Tau_X) := \{\, q_h \in L^2(X) \;:\; q_h|_{K} \in \mathbb{P}_k(K)\ \ \forall\, K\in \Tau_X \,\}.
\label{eq:polygeneral}
\end{equation}
Functions in $\mathbb{P}_k(\Tau_X)$ are generally discontinuous across inter-element
boundaries and are not required to satisfy any external boundary condition.

\subsubsection{Discrete projections for the weak primal problem \label{sec:discrete_primal}}

The projection of potentials for the weak primal problem on non-matching grids is carried out in two stages. First, potentials defined on the source grids (the internal boundary grids and lower-dimensional subdomain grids) are prolonged onto the corresponding transfer grids. Second, the potentials are projected onto the target interface grid using the Scott-Zhang quasi-interpolant.

To be precise, let $j\in J$ be fixed and let $(\partial_{j}\Omega_{\hatj}, \Gamma_j, \Omega_{\checkj})$ be a coupling triplet. For $k\geq 1$, we define the prolongation operators 
\begin{subequations}
    \begin{align}
    \highToIntfPrimalToAux &: \mathbb{P}_k(\Tau_{\partial_j\Omega_{\hatj}})\cap H^{1/2}(\partial_j\Omega_{\hatj}) \to \mathbb{P}_k(\Tau_{\Gamma_j, \partial_j\Omega_{\hatj}}) \cap H^{1/2}(\partial_j\Omega_{\hatj}), \label{eq:primal_restriction_from_internal_boundary}\\
    \lowToIntfPrimalToAux &: \mathbb{P}_k(\Tau_{\Omega_{\checkj}}) \cap (H^1{(\Omega_{\checkj})+g_{\checkj}}) \to \mathbb{P}_k(\Tau_{\Gamma_j, \Omega_{\checkj}}) \cap (H^1{(\Omega_{\checkj})+g_{\checkj}}), \label{eq:primal_restriction_from_fracture}
\end{align}
\end{subequations}
where the transfer grids $\Tau_{\Gamma_j, \partial_j\Omega_{\hatj}}$ and $\Tau_{\Gamma_j, \Omega_{\checkj}}$ are defined as in Definition~\ref{def:auxiliary_grid}. 
Since $\Tau_{\Gamma_j, X}$ is a subdivision of $\Tau_{X}$,
for any potential $q_h \in \mathbb{P}_k(\Tau_{\partial_j\Omega_{\hatj}})\cap H^{1/2}(\partial_j\Omega_{\hatj})$, \eqref{eq:primal_restriction_from_internal_boundary} satisfies
\begin{equation}
    (\highToIntfPrimalToAux q_h)\rvert_{K_{\Gamma_j, \partial_j\Omega_{\hatj}}} = q_h \rvert_{K_{\partial_j\Omega_{\hatj}}}, \qquad \forall K_{\Gamma_j, \partial_{j}\Omega_{\hatj}} \in S_{\Gamma_{j}, \partial_j\Omega_\hatj}, \label{eq:restriction_property}
\end{equation}
for each transfer macro-element $S_{\Gamma_{j}, \partial_j\Omega_\hatj}$ and for each  $K_{\partial_{j}\Omega_{\hatj}} \subseteq S_{\Gamma_{j}, \partial_j\Omega_\hatj}$. The operator $\widetilde{\mathcal{P}}_{\checkj}^{j,\checkj}$ from \eqref{eq:primal_restriction_from_fracture} satisfies an analogous property for potentials defined on $\Tau_{\Omega_{\checkj}}$.

Once functions have been prolonged to the transfer grids, we project them onto the target interface grid:
\begin{subequations}
    \begin{align}
    \highToIntfPrimalFromAux & : \mathbb{P}_k(\Tau_{\Gamma_j, \partial_j\Omega_{\hatj}}) \cap H^{1/2}(\partial_j\Omega_{\hatj}) \to \mathbb{P}_k(\Tau_{\Gamma_j})\cap H^{1/2}(\Gamma_{j}), 
\label{eq:primal_scott_zhang_from_internal_boundary}\\
     \lowToIntfPrimalFromAux & : \mathbb{P}_k(\Tau_{\Gamma_j, \Omega_{\checkj}}) \cap (H^{1}(\Omega_{\checkj}) +g_\checkj) \to \mathbb{P}_k(\Tau_{\Gamma_j})\cap H^{1/2}(\Gamma_{j}) \label{eq:primal_scott_zhang_from_fracture}.
\end{align}
\end{subequations}
Here, $\highToIntfPrimalFromAux$ and  $\lowToIntfPrimalFromAux$ denote Scott-Zhang quasi-interpolators constructed on $\Tau_{\Gamma_j}$, see~\cite{scott1990finite} for the details.

\begin{definition}[Discrete projections for the primal problem] Let $j\in J$ and consider the coupling triplet $(\partial_{j}\Omega_{\hatj}, \Gamma_j, \Omega_{\checkj})$. For a fixed degree $k\geq 1$, the discrete projections for the weak primal problem are given by the following compositions
\begin{align}
    \highToIntfPrimalDiscrete := \highToIntfPrimalFromAux \circ \highToIntfPrimalToAux, \qquad \lowToIntfPrimalDiscrete := \lowToIntfPrimalFromAux \circ \lowToIntfPrimalToAux. \label{eq:discrete_primal_projections}
\end{align}
\end{definition}

The following lemma establishes a key stability result for the projection operators.

\begin{lemma}[Stability of discrete projection operators for the weak primal problem]
\label{lem:stability_discrete_primal}
Let $j\in J$ and $k\ge1$. Then, the operators $\widetilde{\mathcal{P}}_{\hatj}^j$ and $\widetilde{\mathcal{P}}_{\checkj}^j$ in \eqref{eq:discrete_primal_projections} are linear and bounded:
\[
\|\highToIntfPrimalDiscrete q_h\|_{H^{1/2}(\Gamma_j)} \le C_1\, \|q_h\|_{H^{1/2}(\partial_j\Omega_{\hatj})},
\qquad
\|\lowToIntfPrimalDiscrete \phi_h\|_{H^{1/2}(\Gamma_j)} \le C_2\, \|\phi_h\|_{H^{1}(\Omega_{\checkj})},
\]
with constants $C_1$ and $C_2$ depending only on the shape-regularities of the target and source grids and on the polynomial degree $k$.
\end{lemma}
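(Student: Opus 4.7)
The strategy is to exploit the compositional structure of the operators. Since each discrete projection in \eqref{eq:discrete_primal_projections} is a composition of two linear maps, linearity is immediate, and the stability bounds follow by controlling each stage separately. I would carry this out in two sub-steps that mirror the construction: first bound the prolongation onto the transfer grid, then bound the Scott-Zhang quasi-interpolation onto the target interface grid.

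The first stage (prolongation) is cost-free: because $\Tau_{\Gamma_j,X}$ is by construction a simplicial refinement of $\Tau_X$ (Definition~\ref{def:auxiliary_grid}), the restriction property \eqref{eq:restriction_property} guarantees that $\highToIntfPrimalToAux q_h$ coincides pointwise with $q_h$ on $\partial_j\Omega_\hatj$; only the mesh under which it is piecewise polynomial changes. Consequently, $\highToIntfPrimalToAux$ preserves the $H^{1/2}(\partial_j\Omega_\hatj)$-norm exactly, and the analogous statement holds for $\lowToIntfPrimalToAux$ with the $H^1(\Omega_\checkj)$-norm. No constant is incurred at this stage.

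For the second stage, I would use Assumption~\ref{ass:isometry} together with Lemma~\ref{lem:cont_transfers_basic}(ii)-(iii) to re-interpret the prolonged functions as elements of $H^{1/2}(\Gamma_j)$. For $\highToIntfPrimalFromAux$ I would then invoke $H^{1/2}$-stability of the Scott-Zhang quasi-interpolator on the shape-regular mesh $\Tau_{\Gamma_j}$, yielding $\|\highToIntfPrimalFromAux v\|_{H^{1/2}(\Gamma_j)} \le C_{SZ}^{1/2}\|v\|_{H^{1/2}(\Gamma_j)}$ with $C_{SZ}^{1/2}$ depending only on the shape-regularity constant $\vartheta$ and on $k$. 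For $\lowToIntfPrimalFromAux$ a cleaner route is available: the input lies in $H^1(\Omega_\checkj)$, which identifies with $H^1(\Gamma_j)$; then the standard $H^1$-stability of Scott-Zhang combined with the continuous embedding $H^1(\Gamma_j)\hookrightarrow H^{1/2}(\Gamma_j)$ (constant $C_{\mathrm{emb}}$ from Lemma~\ref{lem:cont_transfers_basic}(iii)) delivers the required bound. Chaining the two stages produces the claimed estimates with $C_1 = C_{SZ}^{1/2}$ and $C_2 = C_{\mathrm{emb}}\, C_{SZ}^{1}$.

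The principal technical subtlety is precisely the fractional $H^{1/2}$-stability of Scott-Zhang needed in the high-dimensional case, since the more commonly cited stability estimate is in $H^1$. I would establish it either by real-method Sobolev interpolation between the standard $L^2$- and $H^1$-stability of the quasi-interpolant on shape-regular meshes, or by appealing directly to the fractional-order stability estimates proven in \cite{scott1990finite}. A minor subsidiary point worth verifying is that the input to the Scott-Zhang operator is piecewise polynomial on the transfer mesh $\Tau_{\Gamma_j,X}$ rather than on $\Tau_{\Gamma_j}$ itself; this is immaterial, because the Scott-Zhang local averages depend only on the Sobolev regularity of the input, not on any alignment with the target mesh $\Tau_{\Gamma_j}$.
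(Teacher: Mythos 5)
Your proposal is correct and follows essentially the same route as the paper: the prolongation stage is treated as a norm-preserving identification on a refinement of the same domain, the first bound comes from $H^{1/2}$-stability of the Scott--Zhang quasi-interpolant on the shape-regular target grid, and the second from its $H^1$-stability combined with the embedding $H^1(\Gamma_j)\hookrightarrow H^{1/2}(\Gamma_j)$. Your additional remarks on obtaining the fractional stability by interpolation between the $L^2$- and $H^1$-stability estimates, and on the irrelevance of the input being piecewise polynomial with respect to the transfer mesh rather than $\Tau_{\Gamma_j}$, only make explicit points the paper leaves implicit.
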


\begin{proof} The prolongation operators are isometric  identifications on an equal or more refined partition of the same domain; thus norm preserving in $H^s$, $s\in [0, 1]$ (see e.g.,~\cite{lions2012non, mclean2000strongly}). The Scott-Zhang quasi-interpolant on the target interface grid is linear and $H^s$-stable for $s
\in [0, 1]$~\cite{scott1990finite}. Setting $s=1/2$ yields the first bound. For the second bound, use the Scott-Zhang stability with $s=1$ and then the continuous embedding $H^1(\Gamma_j) \hookrightarrow H^{1/2}(\Gamma_j)$, cf. Section~4.8 of \cite{brenner2008mathematical}.
\end{proof}

We emphasize that the stability results in Lemma~\ref{lem:stability_discrete_primal} (and for the ones from the next section) are independent from the transfer grids, as they act purely as data identifiers (stability constants equal to one). This result is numerically verified in Section~\ref{sec:verification}.

\subsubsection{Discrete projections of potentials for the weak dual problem\label{sec:discrete_dual_potentials}}

We now construct $L^2$–type discrete projections of potentials for the weak dual problem. These projections map traces from the internal boundary and lower-dimensional grids onto the interface grid. As in the weak primal case, we proceed in two stages: (i) elementwise prolongation to a transfer grid that resolves the geometric mismatch, and (ii) an $L^2$–projection onto the target interface grid.

Fix $j\in J$ and consider the coupling triplet $(\partial_j\Omega_{\hatj},\Gamma_j,\Omega_{\checkj})$ with transfer grids $\Tau_{\Gamma_j,\partial_j\Omega_{\hat j}}$ and $\Tau_{\Gamma_j,\Omega_{\check j}}$ from Definition~\ref{def:auxiliary_grid}. For $k\ge0$, define the prolongation operators
\begin{subequations}
    \begin{align}
  \highToIntfDualToAux &: \mathbb{P}_k(\Tau_{\partial_j\Omega_{\hat j}})
      \longrightarrow \mathbb{P}_k(\Tau_{\Gamma_j,\partial_j\Omega_{\hat j}}), \label{eq:dual_restrict_high} \\
  \lowToIntfDualToAux &: \mathbb{P}_k(\Tau_{\Omega_{\check j}})
      \longrightarrow \mathbb{P}_k(\Tau_{\Gamma_j,\Omega_{\check j}}), \label{eq:dual_restrict_low}
\end{align}
\end{subequations}
such that, for every macro-element $S_{\Gamma_j,X}$ ($X\in\{\partial_j\Omega_{\hatj},\Omega_{\checkj}\}$) and the unique source element $K_X$ with $S_{\Gamma_j,X}\subset K_X$, we have the exact restriction $(\,\cdot\,)|_{S_{\Gamma_j,X}} = (\,\cdot\,)|_{K_X}$. Thus, functions prolonged to the transfer grid are in general discontinuous across transfer elements.

Let $\{\phi_i^K\}_{i=1}^{N_k}$ be a basis of $\mathbb{P}_k(K)$ for each interface element $K\in\Tau_{\Gamma_j}$. The $L^2$–projection onto $\mathbb{P}_k(K)$ is obtained by solving the local system $M^K \alpha^K = b^K$,  where
\begin{equation}
    M_{ij}^K = (\phi_i^K,\phi_j^K)_K, \qquad 
b_i^K = (w_h,\phi_i^K)_K
    = \tinySum_{K_{\Gamma_j, X} \subset K} ~\inner{w_h}{\phi_i^K}_{K_{\Gamma_j, X}}, \label{eq:l2_proj_M_and_b}
\end{equation}
with $w_h$ denoting the prolonged function produced by
\eqref{eq:dual_restrict_high}–\eqref{eq:dual_restrict_low}
and the sum is taken over all transfer elements $K_{\Gamma_j, X} \in \Tau_{\Gamma_j, X}$ geometrically contained in $K\in \Tau_{\Gamma_j}$.

\begin{definition}[Discrete projections of potentials for the dual problem]
Let $j\in J$ and consider the coupling triplet $(\partial_{j}\Omega_{\hatj}, \Gamma_j, \Omega_{\checkj})$. For a fixed degree $k\geq 0$, the discrete projections of potentials for the weak dual problem are given by compositions
\begin{align}
  \highToIntfDualDiscrete
  ~:=~ \highToIntfDualFromAux \circ \highToIntfDualToAux, \qquad 
  \lowToIntfDualDiscrete
  ~:=~ \lowToIntfDualFromAux \circ \lowToIntfDualToAux.
\end{align}
\end{definition}

\begin{lemma}[Stability of discrete projections of potentials for the dual problem]
\label{lem:dual_potential_stability}
The operators $\highToIntfDualDiscrete$ and $\lowToIntfDualDiscrete$ are linear,
$L^2(\Gamma_j)$–stable and satisfy, for all admissible sources,
\begin{equation}
  \|\highToIntfDualDiscrete q_h\|_{\Gamma_j}
  \le C_3\, \|q_h\|_{\partial_j\Omega_{\hatj}}, \qquad  \|\lowToIntfDualDiscrete \varphi_h\|_{\Gamma_j}
  \le C_4\, \|\varphi_h\|_{\Omega_{\checkj}},
\end{equation}
where $C_3$ and $C_4$ depend only on the shape-regularities of source and target grids and on
the polynomial degree $k$.
\end{lemma}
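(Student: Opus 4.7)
The plan is to exploit the two-step composition structure in the definition of the operators: establish $L^2$-stability of the prolongation factor, establish $L^2$-stability of the interface $L^2$-projection factor, and then chain them. First I would analyze the prolongations $\highToIntfDualToAux$ and $\lowToIntfDualToAux$. By construction, each transfer macro-element $S_{\Gamma_j,X}$ is geometrically contained in a unique source element $K_X$, and the prolonged function takes on $S_{\Gamma_j,X}$ exactly the same polynomial values as the source function on $K_X$. Hence the prolongation is an elementwise identification on a mesh that partitions the same geometric region as the source mesh. Invoking Assumption~\ref{ass:isometry}, the source domain $X \in \{\partial_j\Omega_\hatj, \Omega_\checkj\}$ is identified with $\Gamma_j$ via a bi-Lipschitz homeomorphism with uniformly bounded constants, so the $L^2$-norm of the prolonged function on $\Gamma_j$ (summed over the transfer elements) equals the $L^2$-norm of the source on $X$, up to the bi-Lipschitz Jacobian bounds. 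This gives an $L^2$-bound with constant independent of the transfer grid itself.

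Next I would handle the $L^2$-projection factors $\highToIntfDualFromAux$ and $\lowToIntfDualFromAux$. By the elementwise normal equations $M^K \alpha^K = b^K$ in \eqref{eq:l2_proj_M_and_b}, the action of these operators on each target element $K\in\Tau_{\Gamma_j}$ is precisely the orthogonal $L^2(K)$-projection of the prolonged datum (which lies in $L^2(K)$ because $\Tau_{\Gamma_j,X}$ refines $\Tau_{\Gamma_j}$) onto the finite-dimensional subspace $\mathbb{P}_k(K)$. Orthogonal projections have operator norm one, so $\|P_K w_h\|_{L^2(K)} \le \|w_h\|_{L^2(K)}$ for each $K$, and squaring and summing over $K \in \Tau_{\Gamma_j}$ produces the global $L^2(\Gamma_j)$-stability of the interface projection with constant $1$.

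Composing the two steps yields
\[
\|\highToIntfDualDiscrete q_h\|_{\Gamma_j}
= \|\highToIntfDualFromAux (\highToIntfDualToAux q_h)\|_{\Gamma_j}
\le \|\highToIntfDualToAux q_h\|_{\Gamma_j}
\le C_3\,\|q_h\|_{\partial_j\Omega_\hatj},
\]
with $C_3$ depending only on the bi-Lipschitz constants of $\Psi_\hatj^j$ from Assumption~\ref{ass:isometry}; the argument for $\lowToIntfDualDiscrete$ is identical using $\Psi_\checkj^j$, giving $C_4$. Linearity of both the prolongation and the $L^2$-projection is immediate from their definitions, and the claimed dependence on shape-regularity and polynomial degree in the statement is absorbed into these (in fact trivial) bounds.

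The only genuinely delicate point to get right is in Step~1: one must track that the pullback of the $L^2$-inner product under the bi-Lipschitz identification produces a Jacobian weight that is uniformly bounded above and below, so that the prolongation does not merely preserve polynomial values elementwise but is in fact $L^2$-bounded with a grid-independent constant. Everything else reduces to standard facts about orthogonal projections onto closed subspaces of $L^2$.
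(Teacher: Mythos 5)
Your proposal is correct and follows essentially the same route as the paper's proof: the prolongation is treated as an elementwise $L^2$-stable identification onto the refined transfer grid, the interface step is recognized as an elementwise $L^2$-orthogonal projection with operator norm one, and the two bounds are composed. Your extra care in tracking the bi-Lipschitz Jacobian from Assumption~\ref{ass:isometry} is a welcome refinement of the paper's terser ``isometric identification'' phrasing, but it does not change the argument.
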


\begin{proof}
The prolongation step acts as an isometric identification on each transfer sub-element and is therefore $L^2$-stable. The projection onto the target interface grid is the
standard elementwise $L^2$–orthogonal projector, hence locally stable and the best-approximation property holds
by orthogonality. Global bounds follow from uniform shape-regularity and the fact that, for each target element $K \in \Tau_{\Gamma_j}$, only finitely many transfer elements $K_{\Gamma_j, X} \subset K$ contribute to the local projection.
\end{proof}

\begin{remark}[Discrete projections of potentials for dual approximations of the lowest order] 
 For the lowest-order case $k = 0$, the projection amounts to elementwise averages on $\Tau_{\Gamma_j}$, i.e., a weighted sum with weights proportional to the measures of the overlaps $K_{\Gamma_j, X} \subset K$.
\end{remark}

\subsubsection{Discrete projections of interface fluxes for the weak dual problem\label{sec:discrete_dual_fluxes}}

We construct discrete projections for interface fluxes onto the internal boundary
$\Tau_{\partial_j\Omega_{\hatj}}$ and the lower-dimensional grid $\Tau_{\Omega_{\checkj}}$, while preserving the elementwise mass on the target grids. The construction consists of (i) a prolongation to a transfer grid and (ii) an agglomeration on the target grid via a constrained $L^2$ projection enforcing cell mean.

Fix $j\in J$ and the coupling triplet $(\partial_j\Omega_{\hatj}, \Gamma_j, \Omega_{\checkj})$ with transfer grids $\Tau_{\Gamma_j,\partial_j\Omega_{\hatj}}$ and $\Tau_{\Gamma_j,\Omega_{\checkj}}$ from Definition~\ref{def:auxiliary_grid}. For $k\ge 0$ and  $\nu_h\in \mathbb{P}_k(\Tau_{\Gamma_j})$, define the elementwise prolongations
\begin{subequations}
    \begin{align}
  \intfToHighDualToAux &:
  \mathbb{P}_k(\Tau_{\Gamma_j}) \to
  \mathbb{P}_k(\Tau_{\Gamma_j,\partial_j\Omega_{\hatj}}), \\
  \intfToLowDualToAux &:
  \mathbb{P}_k(\Tau_{\Gamma_j}) \to
  \mathbb{P}_k(\Tau_{\Gamma_j,\Omega_{\checkj}}),
\end{align}
\end{subequations}
by elementwise value restriction: $(\,\cdot\,)|_{S_{\Gamma_j,X}} = (\,\cdot\,)|_{K_{\Gamma_j}}$ for each macro-element $S_{\Gamma_j,X}\subset K_{\Gamma_j}$ and 
$X\in\{\partial_j\Omega_{\hatj},\Omega_{\checkj}\}$. As in Section~\ref{sec:discrete_dual_potentials}, the restricted functions are generally discontinuous across transfer elements. Given the prolonged data $\nu_h$ on $\Tau_{\Gamma_{j}, X}$ for $X \in \{\partial_j\Omega_\hatj, \Omega_\checkj\}$, we perform a constrained $L^2$ agglomeration on the target cell $K\in \Tau_{X}$. Let $\{\phi_i^K\}_{i=1}^{N_k}$ be a basis of $\mathbb{P}_k(K)$. Assemble $M_{ij}^K$ and $b_i^K$ as in \eqref{eq:l2_proj_M_and_b} and additionally define the mass constraints
\begin{equation}
    c_i^K = \inner{1}{\phi_i^K}_K, \qquad m^K= \tinySum_{K_{\Gamma_j, X} \subset K} ~\inner{\nu_h}{1}_{K_{\Gamma_j, X}}.
\end{equation}
The coefficients $\alpha^K$ and the Lagrange multiplier $\mu^K$ are determined from the local constrained optimization problem
\begin{equation}\label{eq:mc_kkt_target}
  \begin{bmatrix}
    M^K & c^K\\
    (c^K)^{\!\top} & 0
  \end{bmatrix}
  \begin{bmatrix}
    \alpha^K\\ \mu^K
  \end{bmatrix} = \begin{bmatrix}
      b^K \\ m^K
  \end{bmatrix}.
\end{equation}
The unique solvability of this system is guaranteed for any $k\ge 0$. The local mass matrix $M^K$ is SPD and since $1\in \mathbb{P}_k(K)$, the constraint vector $c^K$ is non-zero. This ensures the satisfaction of the inf-sup condition for the Lagrange multiplier $\mu^K$. Consequently, the resulting polynomial on $K$ is $(\intfToHighDualFromAux \nu_h)|_K ~=~ \sum_{i=1}^{N_k} \alpha_i^K \phi_i^K$ and analogously for $\intfToLowDualFromAux(\cdot)$ on $\Tau_{\Omega_\checkj}$.

\begin{definition}[Discrete projections of interface fluxes for the dual problem] Let $j\in J$ and consider the coupling triplet $(\partial_{j}\Omega_{\hatj}, \Gamma_j, \Omega_{\checkj})$. For a fixed degree $k\geq 0$, the discrete projections of interface fluxes for the weak dual problem are given by compositions
\begin{align}
  \intfToHighDualDiscrete
  ~:=~ \intfToHighDualFromAux \circ \intfToHighDualToAux, \qquad 
  \intfToLowDualDiscrete
  ~:=~ \intfToLowDualFromAux \circ \intfToLowDualToAux.
\end{align}
\end{definition}

The resulting mapping preserves elementwise mass and inherits stability from the constrained $L^2$ projection.

\begin{lemma}[Stability of discrete projections of interface fluxes for the dual problem]
The operators $\intfToHighDualDiscrete$ and
$\intfToLowDualDiscrete$ are linear, $L^2$–stable, and satisfy
\begin{equation}
  \|\intfToHighDualDiscrete \nu_h\|_{\partial_j\Omega_{\hatj}}
  \le C_5\, \|\nu_h\|_{\Gamma_j}, \qquad  \|\intfToLowDualDiscrete \nu_h\|_{\Omega_\checkj}
  \le C_6\, \|\nu_h\|_{\Gamma_j},
\end{equation}
with constants depending only on mesh shape-regularity and the degree $k$. Moreover, for each $K\in\Tau_{\partial_j\Omega_\hatj}$, the local polynomial $(\intfToHighDualDiscrete \nu_h)|_K$ is characterized as the unique solution of the constrained minimization problem
\begin{equation}
      (\intfToHighDualDiscrete \nu_h)|_K
  = \arg\min_{\varphi \in \mathbb{P}_k(K)}
  \bigl\{\|\varphi - \nu_h\|_{K} :
  \inner{\varphi}{1}_K = m^K \bigr\},
\end{equation}
and an analogous statement holds for $\intfToLowDualDiscrete(\cdot)$ on $\Tau_{\Omega_\checkj}$.

\end{lemma}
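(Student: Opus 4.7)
The plan is to establish the variational (constrained best-approximation) characterization first, since both linearity and the $L^2$-stability will follow cleanly from it. Linearity of $\intfToHighDualDiscrete$ is immediate: the prolongation step is elementwise value restriction, and the KKT block system \eqref{eq:mc_kkt_target} has an invertible coefficient matrix since $M^K$ is SPD and $c^K\neq 0$ (because $1\in\mathbb{P}_k(K)$), so $\nu_h\mapsto\alpha^K$ is linear by Cramer's rule. For the minimization characterization, write $\varphi=\sum_{i=1}^{N_k}\alpha_i^K\phi_i^K\in\mathbb{P}_k(K)$ and observe that, using the assembly rule \eqref{eq:l2_proj_M_and_b},
\begin{equation*}
\tfrac12\,(\alpha^K)^{\!\top}M^K\alpha^K - (b^K)^{\!\top}\alpha^K
= \tfrac12\,\norm{\varphi}_K^2 - \inner{\nu_h}{\varphi}_K
= \tfrac12\,\norm{\varphi-\nu_h}_K^2 - \tfrac12\,\norm{\nu_h}_K^2,
\end{equation*}
where the $L^2$ products on the target cell $K\in\internalBoundaryGrid{j}{\hatj}$ are understood as sums over the transfer sub-cells $K_{\Gamma_j,\partial_j\Omega_\hatj}\subset K$ partitioning $K$ (precisely as $b^K$ was defined). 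Likewise, $(c^K)^{\!\top}\alpha^K=m^K$ reads $\inner{1}{\varphi}_K=\inner{1}{\nu_h}_K$. Therefore \eqref{eq:mc_kkt_target} is exactly the KKT system of the constrained $L^2$-best-approximation problem of the statement, and the SPD structure guarantees uniqueness of the minimizer.

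The stability bound is then extracted from the minimization property via a simple feasible competitor. The constant $\bar\nu_K:=m^K/|K|$ trivially satisfies $\inner{1}{\bar\nu_K}_K=m^K$, and a double application of Cauchy-Schwarz on the transfer sub-cells partitioning $K$ gives $|m^K|=|\inner{1}{\nu_h}_K|\le|K|^{1/2}\norm{\nu_h}_K$, hence $\norm{\bar\nu_K}_K\le\norm{\nu_h}_K$. Denoting the minimizer on $K$ by $\varphi_K^{\ast}$, the best-approximation inequality followed by the triangle inequality yield
\begin{equation*}
\norm{\varphi_K^{\ast}}_K \;\le\; \norm{\varphi_K^{\ast}-\nu_h}_K+\norm{\nu_h}_K
\;\le\; \norm{\bar\nu_K-\nu_h}_K+\norm{\nu_h}_K \;\le\; 3\,\norm{\nu_h}_K.
\end{equation*}
Squaring and summing over $K\in\internalBoundaryGrid{j}{\hatj}$, and using that (i) the prolongation $\intfToHighDualToAux$ is an isometric pointwise identification onto the common refinement $\auxiliaryGridHigh$, and (ii) Assumption~\ref{ass:isometry} identifies $\partial_j\Omega_\hatj$ and $\Gamma_j$ as the same geometric domain, produces $\norm{\intfToHighDualDiscrete\nu_h}_{\partial_j\Omega_\hatj}\le C_5\,\norm{\nu_h}_{\Gamma_j}$ with $C_5\le 3$. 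The argument for $\intfToLowDualDiscrete$ is verbatim after exchanging $\hatj$ with $\checkj$ and $\partial_j\Omega_\hatj$ with $\Omega_\checkj$.

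The main obstacle is not a deep analytical difficulty but the careful bookkeeping across the three grids (source $\interfaceGrid$, transfer $\auxiliaryGridHigh$ or $\auxiliaryGridLow$, and target $\internalBoundaryGrid{j}{\hatj}$ or $\subdomainGrid{\checkj}$): the load vector $b^K$, the constraint value $m^K$, and the norm $\norm{\nu_h}_K$ all aggregate contributions over transfer sub-cells contained in a target cell $K$. Once one exploits that the transfer grid is a common refinement and that the prolongation acts as a pointwise identification (so its action is an $L^2$-isometry), the equivalence with the constrained best-approximation problem and the universal constant in the stability bound both fall out directly; sharper $k$- and shape-regularity-dependent constants could be obtained by standard scaling on a reference simplex, but are not needed here.
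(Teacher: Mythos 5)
Your proposal is correct, and it follows the same high-level route as the paper's argument, which simply asserts that ``the prolongation is an $L^2$--isometry on each transfer element and the constrained $L^2$ projection is $L^2$--stable on shape-regular meshes,'' deferring everything else to Lemma~\ref{lem:dual_potential_stability}. The difference is that you actually supply the two proofs the paper leaves implicit. First, you verify that the KKT system \eqref{eq:mc_kkt_target} is precisely the optimality system of the constrained best-approximation problem, via the identity $\tfrac12(\alpha^K)^{\top}M^K\alpha^K-(b^K)^{\top}\alpha^K=\tfrac12\norm{\varphi-\nu_h}_K^2-\tfrac12\norm{\nu_h}_K^2$; this is the correct and standard way to obtain the characterization, and the invertibility of the saddle-point matrix (from $M^K$ SPD and $c^K\neq 0$) gives both uniqueness and linearity. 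Second, and more substantively, you prove the local stability by exhibiting the feasible competitor $\bar\nu_K=m^K/|K|$, bounding it by Cauchy--Schwarz, and invoking the best-approximation property to get $\norm{\varphi_K^{\ast}}_K\le 3\norm{\nu_h}_K$. This buys something the paper does not claim: a universal constant independent of shape-regularity and of $k$ (indeed, since $\bar\nu_K$ is the $L^2$-orthogonal projection of $\nu_h$ onto constants, Pythagoras even improves your bound to $C_5\le 2$), whereas the paper only asserts constants depending on shape-regularity and degree. The one piece of bookkeeping you correctly flag---that $b^K$, $m^K$, and $\norm{\nu_h}_K$ are all assembled over the transfer sub-cells partitioning $K$, and that the prolongation plus the identification of Assumption~\ref{ass:isometry} make $\norm{\nu_h}_{\partial_j\Omega_{\hatj}}=\norm{\nu_h}_{\Gamma_j}$---is exactly what makes the global bound follow from the local ones, so there is no gap.
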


The argument follows verbatim from Lemma~\ref{lem:dual_potential_stability}: 
the prolongation is an $L^2$–isometry on each transfer element and the constrained 
$L^2$ projection is $L^2$–stable on shape-regular meshes. Importantly, the discrete projection operator $\intfToHighDualDiscrete$ is locally mass-conservative, in the sense that for any $\nu_h \in \mathbb{P}_k(\Tau_{\Gamma_j})$, there holds
\begin{equation}
    (\intfToHighDualDiscrete \nu_h, 1)_K = m^K
\quad\text{with}\quad
m^K = \tinySum_{K_{\Gamma_j, X} \subset K} ~ \inner{\nu_h}{1}_{K_{\Gamma_j, X}}.
\end{equation}
That is, the total interface flux over $K$ equals the sum of incoming fluxes on all transfer grid elements $K_{\Gamma_j, \partial_j\Omega_\hatj}$ contained in $K$. The same property holds for $\intfToLowDualDiscrete(\cdot)$ on $\Tau_{\Omega_\checkj}$.

\begin{remark}[Discrete projections of interface fluxes for dual approximations of the lowest-order] In the lowest-order case $k=0$, \eqref{eq:mc_kkt_target} reduces to the overlap-weighted average on
each target element:
\begin{equation*}
 (\intfToHighDualFromAux \nu_h)\rvert_K
= \frac{1}{|K|} \sum_{K_{\Gamma_j, \partial_j\Omega_\hatj} \subset K} \inner{\nu_h}{1}_{K_{\Gamma_j, \partial_j\Omega_\hatj}} = \frac{1}{|K|} \sum_{K_{\Gamma_j, \partial_j\Omega_\hatj} \subset K} |K_{\Gamma_j, \partial_j\Omega_\hatj}|~ \nu_h\rvert_{K_{\Gamma_j, \partial_j\Omega_\hatj}}.
\end{equation*}
An analogous property holds for projection onto the low-dimensional subdomain grid.
\end{remark}

In Table~\ref{tab:discrete_projections}, we summarize the six discrete projection operators introduced in this section.
\begin{table}
  \centering
  \caption{Summary of discrete projection operators of potentials and interface fluxes for the primal and dual weak problems on non-matching grids.}
  \setlength{\tabcolsep}{4pt}
  \renewcommand{\arraystretch}{1.8}
  \begin{tabular}{@{} c c c c c c @{}}
    \toprule
    Discrete operator & Problem & Projected quantity
    & Source grid & Target grid \\
    \midrule
    $\highToIntfPrimalDiscrete$ & Primal & Potential
    & $\internalBoundaryGrid{j}{\hatj}$ & $\interfaceGrid$ \\
    $\lowToIntfPrimalDiscrete$  & Primal & Potential
    & $\subdomainGrid{\checkj}$ & $\interfaceGrid$ \\
    $\highToIntfDualDiscrete$   & Dual  & Potential & $\internalBoundaryGrid{j}{\hatj}$ & $\interfaceGrid$ \\
    $\lowToIntfDualDiscrete$   & Dual  & Potential & $\subdomainGrid{\checkj}$ & $\interfaceGrid$ \\
    $\intfToHighDualDiscrete$    & Dual  & Flux & $\interfaceGrid$ & $\internalBoundaryGrid{j}{\hatj}$ \\
    $\intfToLowDualDiscrete$    & Dual  & Flux & $\interfaceGrid$ & $\subdomainGrid{\checkj}$ \\
    \bottomrule
  \end{tabular}
  \label{tab:discrete_projections}
\end{table}

\subsection{Primal and dual approximations\label{sec:primal_dual_approx}}

With the discrete projections operators formally defined, we can now write the approximated primal weak and dual problems. To this aim, consider first the mD broken space of polynomials on subdomains of order $k$ or less:
\begin{equation}
    \mathbb{P}_k(\Tau_{\Omega}) := \prod_{i\in I} \mathbb{P}_k(\subdomainGrid{i}).
\end{equation}

\begin{definition}[Approximated weak primal form on non-matching grids] Let $k\geq1$ be fixed and $p_h=[p_{h, i}]$, $g = [g_i] \in \globalHI$. Then, given $f = [f_i] \in L^2(\Omega)$, find $p_h \in \mathbb{P}_k(\Tau_{\Omega}) \cap ( H^1_0(\Omega) + g)$ such that
\begin{align}
    &\sum_{i\in I} \sum_{K\in\subdomainGrid{i}}\left(\mathcal{K}_i\nabla_i{p_{h,i}}, \nabla_iq_{h,i}\right)_{K}  + \sum_{j\in J}\sum_{K\in\interfaceGrid} \left(\kappa_j( \lowToIntfPrimalDiscrete p_{h,\checkj} - \highToIntfPrimalDiscrete p_{h, \hatj}), \lowToIntfPrimalDiscrete q_{h, \checkj} - \highToIntfPrimalDiscrete q_{h, \hatj}\right)_{K} \nonumber \\
    &~~~~=\sum_{i\in I} \sum_{K\in \subdomainGrid{i}}\left(f_i, q_{h,i}\right)_{K} \qquad \forall q_h=[q_{h, i}]\in \mathbb{P}_k(\mathcal{T}_\Omega)\cap H^1_0(\Omega).
\end{align}
\label{def:primalWeakFormApprox}
\end{definition}

Consider now the dual approximated problem. We wish to construct the discrete space for the mD fluxes, i.e., the discrete counterpart of $\mathbf{X}$. To this aim, we first consider the local Raviart-Thomas space given by
\begin{equation}
    \mathbb{RT}_k(\subdomainGrid{i}) = \{\bm{v}_{h, i}\in \localHdiv : \bm{v}_{h, i}\rvert_{K}\in \mathbb{RT}_k(K) ~\forall~K\in\subdomainGrid{i}\}.
\end{equation}
Then, the space $\mathbf{X}_k$ is given by
\begin{equation}
    \mathbf{X}_k = \prod_{i\in I} \left[ \left(\mathbb{RT}_k(\subdomainGrid{i}) \cap \localVO \right) \times \prod_{j\in\checkS}\widetilde{\mathcal{R}}^*_j\circ \intfToHighDualDiscrete\,\mathbb{P}_k(\interfaceGrid)\right], 
\end{equation}
where $\widetilde{\mathcal{R}}^*_j : \mathbb{P}_k(\internalBoundaryGrid{j}{\hatj})\to \mathbb{RT}_k(\subdomainGrid{\hatj})$ is the discrete extension operator from internal boundary grids onto the interior of higher-dimensional subdomain grids, cf. \cite{boon2020nonmatching} for a detailed discussion of the construction of this operator. We can now write the approximated dual weak problem as follows.

\begin{definition}[Approximated weak dual form on non-matching grids] Let $k\geq0$ and $\bm{u}_h = [\bm{u}_{h, 0, i}, \lambda_{h, j}]$, $p_h = [p_{h, i}]$. Then, given $f=[f_i]\in \globalSubdomainLII$ and $g_D = [g_{D,i}] \in H^{1/2}(\partial_D\Omega)$, find $\bm{u}_h \in \mathbf{X}_{k}$ and $p_h\in \mathbb{P}_k(\Tau_\Omega)$ such that
\begin{subequations}
    \begin{align}
    &\sum_{i\in I} \sum_{K\in\subdomainGrid{i}}\inner{\mathcal{K}^{-1}_i\left(\bm{u}_{h,0,i} + \tinySum_{j\in\checkS}\widetilde{\mathcal{R}}^*_j\circ \intfToHighDualDiscrete\lambda_{h,j}\right)}{\bm{v}_{h, 0,i} + \tinySum_{j\in\checkS}\widetilde{\mathcal{R}}^*_j\circ \intfToHighDualDiscrete\nu_{h,j}}_{K} \nonumber \\
    &~~- \sum_{i\in I} \sum_{K\in\subdomainGrid{i}}\inner{p_{h, i}}{\nabla_i\cdot\left(\bm{v}_{h, 0,i}+\tinySum_{j\in\checkS}\widetilde{\mathcal{R}}^*_j\circ \intfToHighDualDiscrete\nu_{h, j}\right) + \tinySum_{j\in\hatS}\intfToLowDualDiscrete\nu_{h,j}}_{K}  \nonumber \\
    &~~ +\sum_{j\in J} \sum_{K\in\interfaceGrid}\inner{\kappa_j^{-1}\lambda_{h,j}}{\nu_{h,j}}_{K} \nonumber \\
    &~~= -\sum_{i\in I} \sum_{K\in\Tau_{\partial_D\Omega_i}} \inner{g_{D,i}}{\left(\bm{v}_{h, 0,i} \cdot \bm{n}_i\right)\rvert_K}_{K} \qquad \forall\bm{v}_h=[\bm{v}_{h, 0, i}, \nu_{h, j}] \in \mathbf{X}_k. \\
    &\sum_{i\in I} \sum_{K\in\subdomainGrid{i}}\inner{\nabla_i\cdot\left(\bm{u}_{h, 0,i} + \tinySum_{j\in\checkS}\widetilde{\mathcal{R}}^*_j\circ \intfToHighDualDiscrete\lambda_{h, j}\right)}{q_{h, i}}_{K} \nonumber \\
    &~~ - \sum_{i\in I} \sum_{K\in\subdomainGrid{i}} \inner{\tinySum_{j\in\hatS}\intfToLowDualDiscrete\lambda_{h, j}}{q_{h, i}}_{K} \nonumber \\
    &~~=\sum_{i\in I} \sum_{K\in\subdomainGrid{i}} \inner{f_i}{q_{h, i}}_{K}\qquad \forall q_h=[q_{h, i}]\in\mathbb{P}_k(\Tau_{\Omega}).
\end{align}
\label{def:dualWeakFormApproximated}
\end{subequations}
\end{definition}
In Definition~\ref{def:dualWeakFormApproximated}, the Dirichlet boundary grid $\Tau_{\partial_D\Omega_i}$ is defined in analogy to \eqref{eq:internalBoundaryGrid}.

\subsection{Broken energy norms}

To finalize the section, we write the equivalence between the continuous and broken energy norms. For any mD function $q_h\in\mathbb{P}_k(\Tau_\Omega)\cap \globalHIO$, $k\geq1$, we define:
\begin{equation}
    \tnorm{q_h}^2 = \sum_{i\in I} \sum_{K\in\subdomainGrid{i}}\norm{\mathcal{K}_i^{1/2}\nabla_iq_{h,i}}_{K}^2 + \sum_{j\in J} \sum_{K\in\interfaceGrid} \norm{\kappa_j^{1/2}\left(\lowToIntfPrimalDiscrete q_{h, \checkj} - \highToIntfPrimalDiscrete q_{h, \hatj}\right)}_{K}^2,
\end{equation}
and for any mD function $\bm{v}_h=[\bm{v}_{h,0,i},\nu_{h,j}]\in \mathbf{X}_k$, $k\geq0$, we define:
\begin{equation}
    \tnormstar{\bm{v}_h}^2 = \sum_{i\in I} \sum_{K\in \subdomainGrid{i}}\norm{\mathcal{K}_i^{-1/2}\left(\bm{v}_{h, 0,i}+\tinySum_{j\in\checkS}\widetilde{\mathcal{R}}_{j}^*\circ \intfToHighDualDiscrete\nu_{h, j}\right)}_{K}^2 + \sum_{j\in J} \sum_{K\in\interfaceGrid} \norm{\kappa_j^{-1/2}\nu_{h, j}}^2_{K}.
\end{equation}

\section{A posteriori error estimation\label{sec:error_estimation}}

In this section, we first present the abstract \textit{a posteriori} error bounds for the case of non-matching grids. Then, after introducing the reconstruction of fluxes and potentials, we establish fully computable bounds for finite-element and mixed finite-element approximations.

\subsection{A posteriori error estimates\label{sec:error_estimates}}

Theorem 5.1 in \cite{varela2023posteriori} provides a general framework for \textit{a posteriori} error estimation in hierarchical mD elliptic problems. In this work, we focus on its application to locally mass-conservative approximations on non-matching grids, deriving explicit error bounds for the primal and dual variables. Before presenting the main theorem, we make explicit the local mass conservation property in the following assumption.

\begin{assumption}[Locally mass-conservative fluxes on non-matching grids] Let $\bm{u}_h=[\bm{u}_{h,0,i}, \lambda_{h, j}]$ be an approximated mD flux. We say $\bm{u}_h$ is locally mass-conservative if for all $K\in \Tau_{\Omega_i}$, $i\in I$, there holds
\begin{equation}
    \inner{\nabla_i\cdot (\bm{u}_{h,0,i}+\tinySum_{j\in\checkS}\widetilde{\mathcal{R}}_j^*\circ \intfToHighDualDiscrete\lambda_{h,j})-\tinySum_{j\in \hat{S}_i} \intfToLowDualDiscrete\lambda_{h,j} }{1}_K   = \inner{f_i}{1}_K.
\end{equation}
\label{ass:local_mass_conservation}
\end{assumption}

Note that to satisfy Assumption~\ref{ass:local_mass_conservation} on non-matching grids, the discrete projection operators $\intfToHighDualDiscrete$ and $\intfToLowDualDiscrete$ (cf. Section~\ref{sec:discrete_dual_fluxes}) play a key role, as their construction guarantees inter-dimensional mass conservation. Dual approximations such as the one presented in Definition~\ref{def:dualWeakFormApproximated} naturally satisfy Assumption~\ref{ass:local_mass_conservation}. However, primal approximations require more work. Usually, in a post-processing step, one has to locally equilibrate the fluxes to recover $H$(div)-conformity (see, e.g., Section~\ref{sec:flux_and_potential_rec}).

We are now ready to state the main theorem.

\begin{theorem}[Abstract \textit{a posteriori} error estimates for locally mass-conservative fluxes on non-matching grids] Define the error majorant $\mathcal{M}^{\oplus}(q, \bm{v}, f)$, valid for any $q=[q_i]\in \globalHIO + g$ and any $\bm{v}=[\bm{v}_{0, i}, \nu_{j}] \in \globalX$ satisfying Assumption~\ref{ass:local_mass_conservation}, as
\begin{equation}
    \mathcal{M}^\oplus(q, \bm{v}, f) := \eta_{\mathrm{DF}}(q, \bm{v}) + \eta_{\mathrm{R}}(\bm{v}, f),
\end{equation}
where
\begin{subequations}
\begin{align}
    \eta_{\mathrm{DF}}^2(q, \bm{v}) &= \sum_{i\in I}\sum_{K\in \subdomainGrid{i}} {\eta_{\mathrm{DF}_\parallel, K}^2} + \sum_{j\in J} \sum_{K\in \interfaceGrid} \eta_{\mathrm{DF}_\perp, K}^2, \\
    \eta_{\mathrm{R}}^2(\bm{v}, f) &= \sum_{i\in I}\sum_{K\in \subdomainGrid{i}}\eta_{\mathrm{R}, K}^2,
\end{align}
\end{subequations}
are respectively the square of the global diffusive and residual errors, with
\begin{subequations}
\begin{align}
    \eta_{\mathrm{DF}_\parallel, K} &= \norm{\mathcal{K}^{-1/2}(\bm{v}_{0, i}+\tinySum_{j\in\checkS}\widetilde{\mathcal{R}}_j^*\circ\intfToHighDualDiscrete\nu_{j})+\mathcal{K}^{1/2}\nabla_i q_i}_K, \\
    \eta_{\mathrm{DF}_\perp, K} &= \norm{\kappa^{-1/2}_j\nu_j + \kappa^{1/2}_j (\lowToIntfPrimalDiscrete q_{\checkj} - \highToIntfPrimalDiscrete q_{\hatj})}_K, \\
    \eta_{\mathrm{R}, K} &= \frac{h_K}{\pi c_{\mathcal{K}_i,K}}\norm{f_i - \nabla \cdot (\bm{v}_{0, i}+\tinySum_{j\in\checkS}\widetilde{\mathcal{R}}_j^*\circ\intfToHighDualDiscrete\nu_{j}) + \tinySum_{j\in\hatS}\intfToLowDualDiscrete \nu_j}_K,
\end{align}
\end{subequations}
respectively denoting the local subdomain diffusive, local interface diffusive, and local residual errors and with $c_{\mathcal{K}, K}$ denoting the smallest eigenvalue of the permeability matrix $\mathcal{K}_i$ restricted to the element $K$. The following a posteriori error estimates hold:

\begin{enumerate}[label=(\roman*)]
    \item Guaranteed upper bound for the primal mD variable.
    
    Let $p = [p_i] \in \globalHIO + g$ be the solution to the weak primal form from Definition~\ref{def:primalWeakForm} and $q = [q_i] \in  \mathbb{P}_k(\Tau_{\Omega}) \cap (\globalHIO + g)$ for $k\geq 1$ be arbitrary. Then, for any $\bm{v} = [\bm{v}_{0,i}, \lambda_{j}] \in \mathbf{X}_k$ with $k\geq 0$ satisfying Assumption~\ref{ass:local_mass_conservation}, there holds
    \begin{equation}
        \tnorm{p-q} \leq \mathcal{M}^\oplus(q, \bm{v}, f) =: \mathcal{M}^{\oplus}_p,
    \end{equation}
    where $\mathcal{M}^{\oplus}_p$ is the upper bound for the primal variable.

    \item Guaranteed upper bound for the dual mD variable.
    
    Let $\bm{u}=[\bm{u}_{0, i}, \lambda_j] \in \mathbf{X}$ be the solution to the weak dual form from Definition \ref{def:dualWeakForm} and $\bm{v}=[\bm{v}_{0,i}, \nu_{j}] \in \mathbf{X}_k$ satisfying Assumption~\ref{ass:local_mass_conservation} with $k\geq 0 $ be arbitrary. Then, for any $q=[q_{i}]\in \mathbb{P}_k(\mathcal{T}_\Omega) \cap \globalHIO + g$ with $k\geq 1$, there holds
    \begin{equation}
        \tnormstar{\bm{u}-\bm{v}} \leq \mathcal{M}^\oplus(q, \bm{v}, f) =: \mathcal{M}^\oplus_{\bm{u}},
    \end{equation}
    where $\mathcal{M}_{\bm{u}}^\oplus$ is the upper bound for the dual variable.
\end{enumerate}
\label{thm:main_theorem}
\end{theorem}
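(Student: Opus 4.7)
The plan is to adapt the proof of Theorem~5.1 in \cite{varela2023posteriori}, exploiting the fact that the only place where matching between grids was implicitly used there is in the duality pairings on the interfaces. In the present setting these pairings must be carried out through the discrete projection operators $\widetilde{\mathcal{P}}$ and $\widetilde{\mathcal{D}}$ of Sections~\ref{sec:discrete_primal}--\ref{sec:discrete_dual_fluxes}, whose $L^2$- and $H^{1/2}$-stability and (crucially) local mass conservation have already been established. The proof therefore proceeds by (a) a primal energy identity plus integration by parts to derive the diffusive/residual splitting, and (b) a dual Prager--Synge-type argument that reuses the same majorant.

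\textbf{Step-by-step for the primal bound (i).} First, expand $\tnorm{p-q}^2$ by coercivity as the primal bilinear form $B(p-q,p-q)$. Because $p$ solves the weak primal problem of Definition~\ref{def:primalWeakForm}, $B(p-q,w)=(f,w)_\Omega-B(q,w)$ for every admissible $w=p-q\in H^1_0(\Omega)$. Next, insert $\bm{v}=[\bm{v}_{0,i},\nu_j]\in\globalX$ satisfying Assumption~\ref{ass:local_mass_conservation} and rewrite
\[
(f_i,w_i)_{\Omega_i} = \inner{\nabla_i\cdot\bigl(\bm{v}_{0,i}+\tinySum_{j\in\checkS}\widetilde{\mathcal{R}}_j^*\circ\intfToHighDualDiscrete\nu_j\bigr)-\tinySum_{j\in\hatS}\intfToLowDualDiscrete\nu_j}{w_i}_{\Omega_i}+(r_i,w_i)_{\Omega_i},
\]
with $r_i$ a purely local residual that is $L^2$-orthogonal to constants on each $K\in\subdomainGrid{i}$ thanks to Assumption~\ref{ass:local_mass_conservation}. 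Integrating by parts element by element and rearranging the interface jumps using the adjointness of $\widetilde{\mathcal{D}}_j^{\hatj},\widetilde{\mathcal{D}}_j^{\checkj}$ with the primal projections $\widetilde{\mathcal{P}}_{\hatj}^{j},\widetilde{\mathcal{P}}_{\checkj}^{j}$ produces three groups of terms: the subdomain diffusive residual $\eta_{\mathrm{DF}_\parallel,K}$, the interface diffusive residual $\eta_{\mathrm{DF}_\perp,K}$, and the elementwise divergence residual. Applying Cauchy--Schwarz elementwise and a Poincaré inequality (yielding the factor $h_K/(\pi c_{\mathcal{K}_i,K})$) on the last term gives $\tnorm{p-q}^2\le\tnorm{p-q}\cdot\mathcal{M}^{\oplus}(q,\bm{v},f)$, and division by $\tnorm{p-q}$ concludes (i).

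\textbf{Step-by-step for the dual bound (ii) and the main obstacle.} For (ii), I would use the constitutive law $\bm{u}=-\mathcal{K}\nabla p$ element-wise to write $\tnormstar{\bm{u}-\bm{v}}$ in terms of the same diffusive mismatch that appears in $\eta_{\mathrm{DF}_\parallel,K}$ and $\eta_{\mathrm{DF}_\perp,K}$, then apply the triangle inequality and the already-established primal bound. Concretely, adding and subtracting $-\mathcal{K}_i\nabla_i q_i$ and using the definition of $\tnormstar{\cdot}$ yields $\tnormstar{\bm{u}-\bm{v}}\le\eta_{\mathrm{DF}}(q,\bm{v})+\tnorm{p-q}$, and the previous step bounds $\tnorm{p-q}$ by $\mathcal{M}^{\oplus}$; absorbing the algebraic constants gives the desired $\tnormstar{\bm{u}-\bm{v}}\le\mathcal{M}^{\oplus}(q,\bm{v},f)$. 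The main obstacle is the interface accounting: the continuous proof treats $\partial_j\Omega_{\hatj}$, $\Gamma_j$, and $\Omega_{\checkj}$ as the same geometric object so trace and duality pairings collapse to a single integral, whereas here the integration-by-parts boundary term couples functions living on three non-matching grids. Verifying that the compositions $\widetilde{\mathcal{P}}_{\hatj}^{j}$ and $\widetilde{\mathcal{D}}_{j}^{\hatj}$ (and their lower-dimensional counterparts) form matching test/trial pairs through the common transfer grid $\Tau_{\Gamma_j,X}$ --- so that the cross terms telescope exactly as in the matching case --- is the technical crux; this is precisely what the design in Sections~\ref{sec:discrete_dual_potentials}--\ref{sec:discrete_dual_fluxes} (mass-preserving constrained $L^2$ projections plus prolongation to a common refinement) was built to guarantee, which then lets the remainder of the argument transfer essentially verbatim from \cite{varela2023posteriori}.
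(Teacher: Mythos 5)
Your overall strategy coincides with the paper's: the authors prove this theorem simply by asserting that, thanks to the functional compatibility of the discrete projection operators of Section~\ref{sec:discrete_projection_operators}, the argument of Appendix~C of \cite{varela2023posteriori} (a Repin-type functional majorant combined with mD functional analysis) carries over verbatim. Your Step~(i) is a faithful sketch of that argument --- energy identity, insertion of a locally mass-conservative flux, elementwise Cauchy--Schwarz, and the elementwise Poincar\'e inequality producing the factor $h_K/(\pi c_{\mathcal{K}_i,K})$ --- and you correctly identify the genuinely new ingredient, namely that the $\widetilde{\mathcal{P}}$/$\widetilde{\mathcal{D}}$ pairs must interact consistently through the transfer grid so that the interface terms cancel as in the matching case. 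Be aware, though, that the adjointness you invoke for this cancellation is nowhere established in the paper (only stability and local mass conservation are proved), so it remains an unproven assertion in your write-up just as it does behind the paper's phrase ``functional compatibility.''

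The genuine gap is in your Step~(ii). Writing $\bm{u}-\bm{v}=-\mathcal{K}\nabla(p-q)+(-\mathcal{K}\nabla q-\bm{v})$ and applying the triangle inequality gives $\tnormstar{\bm{u}-\bm{v}}\le\tnorm{p-q}+\eta_{\mathrm{DF}}$, and inserting the already-proved primal bound then yields $\tnormstar{\bm{u}-\bm{v}}\le 2\,\eta_{\mathrm{DF}}+\eta_{\mathrm{R}}$, not the claimed $\eta_{\mathrm{DF}}+\eta_{\mathrm{R}}$. ``Absorbing the algebraic constants'' is not available here: the entire point of the theorem is that the majorant is guaranteed with constant one, so a factor of two on the diffusive contribution is a real loss, not a cosmetic one. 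The sharp constant requires the Prager--Synge (hypercircle) orthogonality argument: for an exactly equilibrated $\bm{v}$ one has the identity $\tnorm{p-q}^2+\tnormstar{\bm{u}-\bm{v}}^2=\eta_{\mathrm{DF}}^2$, which bounds the dual error by $\eta_{\mathrm{DF}}$ directly, without routing through the primal estimate, and the elementwise Poincar\'e correction for merely locally mass-conservative fluxes then contributes only the additive $\eta_{\mathrm{R}}$. This orthogonality-based route is what Appendix~C of \cite{varela2023posteriori} uses and what the present proof implicitly relies on; your part~(ii) needs to be replaced by it.
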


\begin{proof}
    Due to the functional compatibility of the discrete projection operators introduced in Section~\ref{sec:discrete_projection_operators}, the proof is identical to the one presented in Appendix C from \cite{varela2023posteriori}. The proof combines techniques from \textit{a posteriori} error estimates of the functional type~\cite{repin2007mixed, repin2008posteriori} and results from mD functional analysis~\cite{boon2020functional}. 
\end{proof}


\subsection{Conforming fluxes and potentials\label{sec:flux_and_potential_rec}}

In order to apply the \textit{a posteriori} bounds from Theorem~\ref{thm:main_theorem},  we need an mD potential in $\globalHIO + g$ and an mD flux in $\bm{X}$ that satisfies mass conservation at the element level. Approximations to the primal problem (Definition~\ref{def:primalWeakFormApprox}) only provide the first ingredient, whereas approximations to the dual problem (Definition~\ref{def:dualWeakFormApproximated}) only provide the second ingredient. Thus, to obtain computable bounds using the finite element method one has to reconstruct the fluxes, while the mixed-finite element requires the reconstruction of the potentials. Deriving optimal reconstruction techniques, even in the lowest-order case, is non-trivial, as one has to balance accuracy with computational cost. We therefore maintain the definition of conforming fluxes and potentials as general as possible while providing references to guide the reader.

\begin{definition}[Conforming flux] We define a conforming flux as any function $\bm{\sigma}_h$ that satisfies
\begin{equation}
    \bm{\sigma}_h \in \bm{X},
\end{equation}
and Assumption~\ref{ass:local_mass_conservation}. \label{def:flux_reconstruction}
\end{definition}

Conforming fluxes are typically obtained after equilibrating the fluxes resulting of solving the discrete weak primal problem from Definition~\ref{def:primalWeakFormApprox}. 

\begin{remark}[On flux reconstruction] Classical flux reconstruction techniques include (i) \emph{superconvergent patch recovery} \cite{zz1987estimator, zienkiewicz1992superconvergent} with $H(\text{div})$-projections,   (ii) local Neumann-patch solves \cite{ainsworth1993posteriori, ern2015polynomial, smears2020simple, cai2020robust}, and (iii) hybrid explicit‐recovery approaches \cite{cai2018hybrid}. In the mD setting, flux equilibration has been limited to enforcing mass balance across 2D/1D matrix–fracture interfaces couplings \cite{chen2017fractured, hecht2019residual, mghazli2019fractured}. To our knowledge, a  general mD flux reconstruction framework has not yet been proposed.
\end{remark}

\begin{definition}[Conforming potential] We define a conforming potential as any function $s_h$ that satisfies
\begin{equation}
    s_h \in \globalHIO + g.
\end{equation}
\label{def:pontential_reconstruction}
\end{definition}

\begin{remark}[On potential reconstruction] Several techniques exist to recover continuous potentials from a non-conforming discretization in individual subdomains. Common approaches include: (i) patch-wise weighted average of cell-centered potentials (e.g., Green-Gauss, distance-weighted averages and least squares fits) \cite{repin2009aposterior, wang2019accuracy, huang2012some, sozer2014gradient, mavriplis2003revisiting}, (ii) element-wise quadratic reconstruction via local polynomial solves~\cite{vohralik2007posteriorimfemfv, vohralik2010unified}, and (iii) element-wise linear postprocessing based on lowest-order Raviart-Tomas basis functions~\cite{varela2025linear}. Since the mD formulation does not require inter-dimensional continuity of potentials, any of these methods can be applied independently to each subdomain grid. 
\end{remark}

\subsection{Fully computable a posteriori error estimates}

Computable estimates are now readily available for primal and dual approximations. 

\begin{definition}[Computable bounds for finite element approximations] Let $p_h\in \mathbb{P}_k(\Tau_{\Omega})\cap (\globalHIO + g)$ with $k\geq 1$ be the solution to the approximated primal problem from Definition~\ref{def:primalWeakFormApprox}, and $\bm{\sigma}_h\in \mathbf{X}_k$ for $k\geq 0$ be a conforming flux (cf. Definition~\ref{def:flux_reconstruction}). Then, set $q=p_h$ and $\bm{v}=\bm{\sigma}_h$ in Theorem~\ref{thm:main_theorem} to obtain fully computable bounds.
\end{definition}

\begin{definition}[Computable bounds for mixed-finite element approximations] Let $(\bm{u}_h, p_h) \in (\mathbf{X}_k\times\mathbb{P}_k(\Tau_\Omega))$ with $k\geq 0$ be the solution to the dual approximated problem from Definition~\ref{def:dualWeakFormApproximated}, and let $s_h\in \mathbb{P}_k(\Tau_\Omega) \cap \globalHIO + g$ for $k\geq 1$ be a conforming potential (cf. Definition~\ref{def:pontential_reconstruction}). Then, set $q=s_h$ and $\bm{v}=\bm{u}_h$ in Theorem~\ref{thm:main_theorem} to obtain fully computable bounds.
\end{definition}

Thanks to the functional-based construction of the abstract \textit{a posteriori} error estimates and provided that approximated potentials and fluxes are properly reconstructed, other classical discretization techniques also fall within the scope of application of Theorem~\ref{thm:main_theorem}. Important methods used in flow modeling in fractured porous media include the  cell-centered finite volume method (CCFVM) \cite{aavatsmark2002introduction, nordbotten2021introduction}---both the two-point flux approximation (TPFA) and the multi-point flux approximation (MPFA)---, the Vertex-Centered Finite Volume Method \cite{reichenberger2006mixed}, the Mimetic Finite Difference Method \cite{antonietti2016mimetic, formaggia2018analysis}, the Discontinuous Galerkin Method \cite{antonietti2019discontinuous, antonietti2022polytopic}, and the Dual Mixed Virtual Element Method (MVEM) \cite{fumagalli2019dual}.

\subsection{Error indicators\label{sec:distinguishing}}

It is often desirable to associate error estimates to an entire subdomain or interface grid. To this aim, we introduce the following error indicators. Let $\eta_{\mathrm{DF}_{\parallel}, K}$ and $\eta_{\mathrm{R}, K}$ for $K\in \Tau_{\Omega_i}$ and  $\eta_{\mathrm{DF}_{\perp}, K}$ for $K\in \Tau_{\Gamma_j}$ be the local error estimators from Theorem~\ref{thm:main_theorem}. Then, we define the subdomain error indicator $\eta_{\Omega_i}$ and the interface error indicator $\eta_{\Gamma_j}$ as
\begin{align}
       \eta_{\Omega_i}^2 := \sum_{K\in \Tau_{\Omega_i}}  \left(\eta_{\mathrm{DF}_{\parallel}, K}^2 + \eta_{\mathrm{R}, K}^2\right) \qquad\text{and}\qquad 
       \eta_{\Gamma_j}^2 := \sum_{K\in \Tau_{\Gamma_j}} \eta_{\mathrm{DF}_{\perp}, K}^2.
\end{align}

To report results of complex fracture networks, it is beneficial to aggregate subdomain and interface errors of the same dimensionality. Let $I_{d_i} \subseteq I$ and $J_{d_j} \subseteq J$ be the reduced index sets of subdomains and interfaces of dimension $d_i$ and $d_j$, respectively. We shall then use 
\begin{align}
    \eta_{\Omega^{d_i}}^2 := \sum_{i\in I_{d_i}} \eta_{\Omega_i}^2 \qquad\text{and}\qquad \eta^2_{\Gamma^{d_j}} := \sum_{i\in J_{d_j}} \eta_{\Gamma_j}^2, 
\end{align}
to designate the error indicators of all $d_i$-dimensional subdomains and all $d_j$-dimensional interfaces, respectively.

\subsection{Effectivity indices} 

To quantify the efficiency of the \textit{a posteriori} error estimates, we shall employ the following effectivity indices
\begin{equation}
    I^{\text{eff}}_p = \frac{\mathcal{M}_p^{\oplus}(s_h, \bm{\sigma}_h,f)}{\tnorm{p-s_h}}, \qquad I^{\text{eff}}_{\bm{u}} = \frac{\mathcal{M}_{\bm{u}}^\oplus(s_h, \bm{\sigma}_h, f)}{\tnormstar{\bm{u}-\bm{\sigma}_h}}.
\end{equation}

\section{Numerical examples \label{sec:numerics}}

In this section, we present numerical verifications of the proposed non-matching error estimators against manufactured solutions and their application for test cases based on community benchmarks of flow in fractured porous media. For the sake of compactness, we only perform our analysis on mD domains with 3D hosts given that cases with 2D hosts are considerably less complex.

To obtain the numerical approximations, we employ the MPFA method~\cite{aavatsmark2002introduction, nordbotten2021introduction, keilegavlen2021porepy}, which is closely related to the lowest-order mixed-finite element approximation~\cite{vohralik2006equivalence}. Fully computable estimates are obtained after two postprocessing steps:
\begin{enumerate}
    \item[(i)]  MPFA face fluxes are directly mapped to the coefficients of the local RT0 space (guaranteeing continuity of normal traces)~\cite{repin2009aposterior}. Repeating this process in each subdomain grid yields a conforming mD flux $\bm{\sigma}_h \in \mathbf{X}_0$ (cf. Definition~\ref{def:flux_reconstruction}).
    \item[(ii)] MPFA potentials are reconstructed in each subdomain grid to obtain a conforming mD potential $s_h \in \mathbb{P}_1(\Tau_\Omega)\cap\globalHIO+g$ using the potential reconstruction technique proposed in~\cite{varela2025linear}. The technique employs the local MPFA face fluxes to first approximate a cell-centered potential gradient at the cell's barycenter which is then mapped to the cell vertices and added to the cell-center potential. This process produces $\mathbb{P}_1$-nodal potentials which are generally non-conforming. Thus, the Oswald interpolator is applied as a final step to produce a conforming global linear potential $s_h \in H^1_0(\Omega) + g$ (cf. Definition~\ref{def:pontential_reconstruction}).
\end{enumerate}

All numerical examples were implemented in \texttt{PorePy}~\cite{keilegavlen2021porepy} and are available for reproduction through the extension package \texttt{mdnme}~\cite{mdnme}.

\subsection{3D/2D verification with a manufactured solution\label{sec:verification}}

\begin{figure}
  \centering 

  \includegraphics[width=0.90\textwidth]{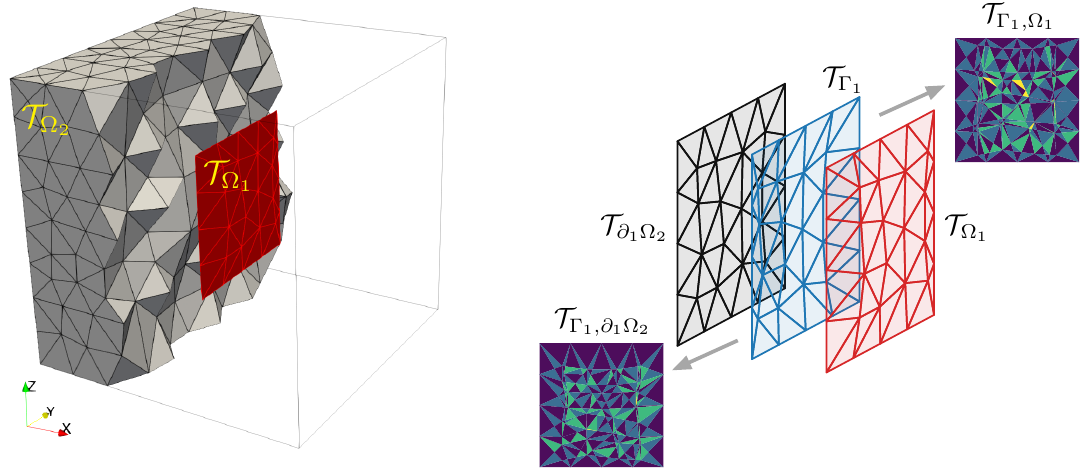}
  \caption{3D/2D numerical verification setup. Left: Mixed-dimensional grid showcasing the matrix grid $\Tau_{\Omega_2}$ and the fully embedded fracture grid $\Tau_{\Omega_1}$. Right: Exploded view of the non-matching coupling between $\Tau_{\partial_1\Omega_2}$ (black), $\mathcal{T}_{\Gamma_1}$ (blue), and $\mathcal{T}_{\Omega_1}$ (red) with the corresponding transfer grids $\Tau_{\Gamma_1, \partial_1\Omega_2}$ and $\Tau_{\Gamma_1, \Omega_1}$ (triangles are colored for visualization purposes only). Example shown for $h=0.15$ and translation $+ \bm{e}_y$.}
  \label{fig:3d_val}
\end{figure}

We employ the 3D manufactured solution from Section~7 of \cite{varela2023posteriori} to verify the efficiency of our non-matching error estimators. The mD domain consists of a unit cube $\Omega_2=(0,1)^3$ hosting a fully embedded vertical fracture
\[
\Omega_1=\{(x,y,z)\in\mathbb{R}^3:\ x=0.5,\ 0.25\le y\le 0.75,\ 0.25\le z\le 0.75\}.
\]
Interfaces $\Gamma_1$ and $\Gamma_2$ couple $\Omega_2$ to the left and right sides of $\Omega_1$, respectively (see Fig.~\ref{fig:3d_val}). With a smooth matrix pressure $p_2:\Omega_2\to\mathbb{R}$, unit tangential permeabilities $\mathcal{K}_2=\mathbf{I}_{3\times3}$ and $\mathcal{K}_1=\mathbf{I}_{2\times2}$, the exact Darcy fluxes $\bm{u}_2:\Omega_2\to\mathbb{R}^3$ and $\bm{u}_1:\Omega_1\to\mathbb{R}^2$ follow from \eqref{eq:tangential_darcy_law_dual_strong}. With unit normal permeabilities $\kappa_1=\kappa_2=1$, the interface fluxes $\lambda_1:\Gamma_1\to\mathbb{R}$ and $\lambda_2:\Gamma_2\to\mathbb{R}$ follow from \eqref{eq:normal_darcy_law_dual_strong}. Finally, \eqref{eq:mass_conservation_dual_strong} gives the exact sources $f_2:\Omega_2\to\mathbb{R}$ and $f_1:\Omega_1\to\mathbb{R}$. Exact expressions are listed in Appendix~D.2 of \cite{varela2023posteriori}.

We consider four refinement levels with target mesh sizes $h\in\{0.3,\,0.15,\,0.075,\,0.0375\}$. For each $h$, we first build a fully matching tessellation of the mD grid (reference configuration). From this, we generate non-matching grids by systematically perturbing internal nodes of the fracture grid $\mathcal{T}_{\Omega_1}$ and the interface side grids $\mathcal{T}_{\Gamma_1}$ and $\mathcal{T}_{\Gamma_2}$. Concretely, each internal node of $\mathcal{T}_{\Omega_1}$ is translated by half of the grid's mean cell diameter in the eight in-plane directions $\{\pm\bm{e}_y,\pm\bm{e}_z,\pm(\bm{e}_y+\bm{e}_z),\pm(\bm{e}_y-\bm{e}_z)\}$, while internal nodes of $\mathcal{T}_{\Gamma_1}$ and $\mathcal{T}_{\Gamma_2}$ are perturbed in the opposite directions (same magnitude). Nodes of the internal boundary grids $\mathcal{T}_{\partial_1\Omega_2}$ and $\mathcal{T}_{\partial_2\Omega_2}$ are left unperturbed. This procedure yields eight fully non-matching configurations per $h$; see the right panel of Fig.~\ref{fig:3d_val} for the left non-matching coupling corresponding to $h=0.15$ and $+\bm{e}_y$ perturbation direction.

Tables~\ref{tab:verification_majorant} and \ref{tab:verification_local} report, respectively, the majorants, true errors, and effectivity indices, and the subdomain/interface contributions, for both matching and non-matching cases. For non-matching results we show the mean over the eight perturbations together with the sample standard deviation. The results indicate that (i) the majorants and effectivity indices for the primal and dual variables remain close to the matching baseline, and (ii) the subdomain and interface contributions decrease monotonically with refinement. The tested non-matching couplings are deliberately challenging (fully non-matching with finite perturbations) to stress-test the estimators; in applications, controlled refinement typically leads to even milder mismatches.

\begin{table}
\centering
\caption{3D/2D verification results: majorants, true errors and effectivity indices.}
\label{tab:verification_majorant}
\begin{tabular}{lrrrrr}
\toprule
$h$ & $\mathcal{M}^\oplus_p=\mathcal{M}^\oplus_{\bm{u}}$ & $\tnormstar{\bm{u} - \bm{\sigma}_h}$ & $I_{\bm{u}}^{\mathrm{eff}}$ & $\tnorm{p - s_h}$ & $I_p^{\mathrm{eff}}$ \\
\midrule
\multirow{2}{*}{0.3} & 2.69e-1 & 7.10e-2 & 3.78 & 2.34e-1 & 1.15 \\
 & 2.70e-1\,$\pm$\,1.34e-3 & 7.06e-2\,$\pm$\,4.29e-4 & 3.83\,$\pm$\,2.0e-2 & 2.36e-1\,$\pm$\,6.07e-4 & 1.15\,$\pm$\,4.6e-3 \\
\midrule
\multirow{2}{*}{0.15} & 1.76e-1 & 4.56e-2 & 3.86 & 1.60e-1 & 1.10 \\
 & 1.77e-1\,$\pm$\,4.20e-4 & 4.63e-2\,$\pm$\,7.99e-4 & 3.81\,$\pm$\,5.7e-2 & 1.60\,$\pm$\,3.17e-4 & 1.10\,$\pm$\,6.0e-4 \\
\midrule
\multirow{2}{*}{0.075} & 9.28e-2 & 2.78e-2 & 3.34 & 8.48e-2 & 1.09 \\
 & 9.33e-2\,$\pm$\,3.04e-4 & 2.90e-2\,$\pm$\,1.32e-3 & 3.22\,$\pm$\,1.4e-1 & 8.54e-2\,$\pm$\,5.31e-4 & 1.09\,$\pm$\,3.3e-3 \\
\midrule
\multirow{2}{*}{0.0375} & 4.75e-2 & 1.86e-2 & 2.56 & 4.59e-2 & 1.04 \\
 & 4.77e-2\,$\pm$\,2.11e-4 & 1.64e-2\,$\pm$\,2.37e-3 & 2.96\,$\pm$\,4.2e-1 & 4.51e-2\,$\pm$\,8.78e-4 & 1.06\,$\pm$\,1.6e-2 \\
\bottomrule
\end{tabular}
\end{table}

\begin{table}
\centering
\caption{3D/2D verification results: Subdomain and interface errors.}
\label{tab:verification_local}
\begin{tabular}{lrrrr}
\hline
$h$ & $\eta_{\Omega_2}$ & $\eta_{\Omega_1}$ & $\eta_{\Gamma_1}$ & $\eta_{\Gamma_2}$ \\
\toprule
\multirow{2}{*}{0.3} & 2.43e-1 & 8.37e-3 & 6.26e-3 & 6.18e-3 \\
 & 2.43e-1\,$\pm$\,1.87e-4 & 5.47e-3\,$\pm$\,5.83e-3 & 2.28e-2\,$\pm$\,4.91e-3 & 2.24e-2\,$\pm$\,4.13e-3 \\
\midrule
\multirow{2}{*}{0.15} & 1.66e-1 & 1.62e-5 & 1.35e-3 & 1.35e-3 \\
 & 1.66e-1\,$\pm$\,8.33e-5 & 3.20e-3\,$\pm$\,3.44e-3 & 5.77e-3\,$\pm$\,1.17e-3 & 5.47e-3\,$\pm$\,1.01e-3 \\
\midrule
\multirow{2}{*}{0.075} & 8.90e-2 & 1.06e-5 & 3.89e-4 & 3.88e-4 \\
 & 8.91e-2\,$\pm$\,7.05e-5 & 1.81e-3\,$\pm$\,1.92e-3 & 4.61e-3\,$\pm$\,1.12e-3 & 4.52e-3\,$\pm$\,1.16e-3 \\
\midrule
\multirow{2}{*}{0.0375} & 4.65e-2 & 1.43e-3 & 1.13e-4 & 1.15e-4 \\
 & 4.64e-2\,$\pm$\,3.53e-5 & 8.58e-4\,$\pm$\,9.14e-4 & 2.31e-3\,$\pm$\,1.45e-3 & 2.33e-3\,$\pm$\,1.40e-3 \\
\bottomrule
\end{tabular}
\end{table}

\subsection{Regular fracture network\label{sec:geiger}}

This numerical example employs the setup of Case 2: Regular network, from the 3D flow benchmark \cite{flemisch2020verification}. The mD domain consists of a unit cube matrix hosting nine regularly oriented fractures (see the left panel of Fig.~\ref{fig:geiger}), resulting in an mD domain with one 3D subdomain, nine 2D subdomains, sixty-nine 1D subdomains, and twenty-seven 0D intersection points.

The original benchmark prescribes no-flux boundary conditions in all external faces except on the red and blue regions, where a constant Darcy flux ($\bm{u}=-1$) and a constant pressure ($p=1$) are imposed, respectively. Since our theory does not cover non-zero external boundary fluxes, and following \cite{varela2023posteriori}, we replace the inflow Neumann condition by a Dirichlet condition with $p=1$ and set the outlet pressure to $p=0$. The matrix subdomain is assigned permeability values $\mathcal{K}\in\{1, 0.1\}$, while the fractures are considered conductive ($\mathcal{K} = 1\times 10^{4}$). The complete set of parameters can be found in Table~4 from \cite{flemisch2020verification}.

The benchmark employs three levels of refinement, with approximately $500$, $1\,400$ and $32\,000$ three-dimensional cells. To generate the non-matching grids, we begin from a fully matching tessellation of the mD domain and then globally refine all fracture grids using nested bisection~\cite{geuzaine2009gmsh}. Importantly, we do not refine the 1D subdomains grids. Refining 1D subdomain grids increases the resolution around intersection points---well-known points of singularity---without improving the accuracy of interface fluxes, resulting in a larger mismatch between the interface fluxes and the (permeability-scaled) pressure jumps~\eqref{eq:normal_darcy_law_dual_strong}. We emphasize that this phenomenon is related to the limited regularity of the model itself~\cite{nordbotten2019unified}, rather to any deficiency of the error estimators.

\begin{figure}
    \centering
    \includegraphics[width=0.99\textwidth]{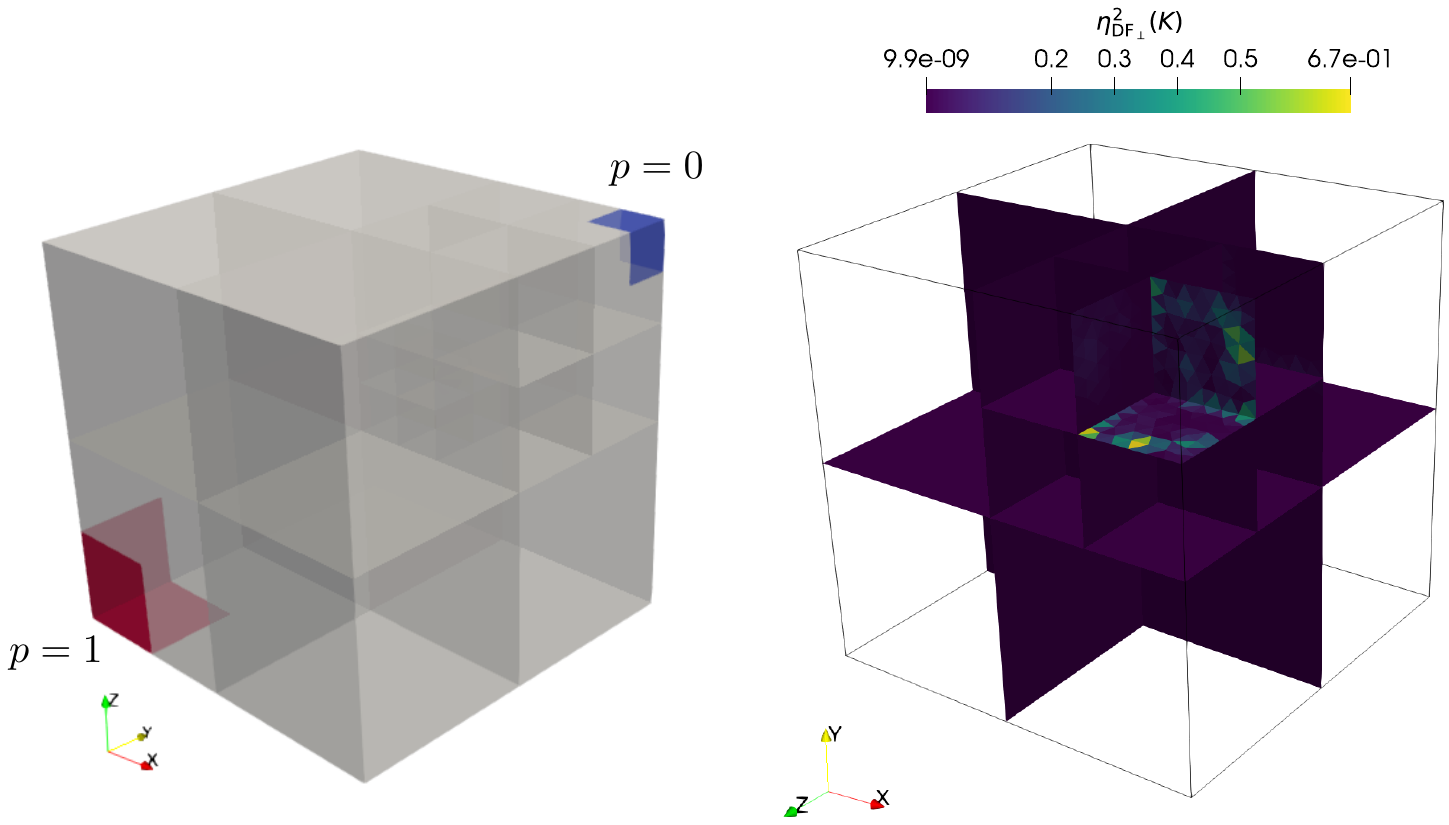}
    \caption{Regular network. Left: Geometry of the fracture network and boundary conditions. Right: Local diffusive error estimators of 2D interface grids for the finest mesh resolution.}
    \label{fig:geiger}
\end{figure}

Table~\ref{tab:geiger3dresults} reports the aggregated local errors grouped by dimensionality, together with the majorant. All quantities decrease monotonically with refinement, as expected for a stable and consistent numerical scheme. The dominant contributions to the majorant originate from the 2D interfaces, highlighting that MPFA struggles to accurately represent the interface law; this is expected considering that it is a lowest-order method. The right panel of Fig.~\ref{fig:geiger} shows the spatial distribution of the local diffusive errors. Not surprisingly, the largest errors are located around the outlet boundary region, where steep pressure gradients occur.

\begin{table}[tbp]
    \caption{Majorant and dimension-based aggregated error estimators for the 3D regular network benchmark from Section~\ref{sec:geiger}.}
    \centering
    \begin{tabular}{c c c c c c c c}
        \toprule
        Mesh & $\eta_{\Omega^3}$ & $\eta_{\Omega^2}$ & $\eta_{\Omega^1}$ & $\eta_{\Gamma^2}$ & $\eta_{\Gamma^1}$ & $\eta_{\Gamma^0}$ & $\mathcal{M}^{\oplus}$\\ 
        \midrule
        Coarse & 5.70e-1 & 3.56e-2 & 1.16e-3 & 4.35e+2 & 4.27e+0 & 8.29e-2 & 1.98e+2 \\
        \midrule
        Intermediate & 3.80e-1 & 1.92e-2 & 8.71e-4 & 6.57e+1 & 3.19e+0 & 2.32e-2 & 2.60e+1 \\
        \midrule
        Fine        & 3.24e-1 & 1.10e-2 & 6.95e-4 & 1.52e+1 & 1.41e+0 & 1.51e-2 & 6.55e+0 \\
        \bottomrule
    \end{tabular}
    \label{tab:geiger3dresults}
\end{table}

\subsection{Production on a fracture network with small features\label{sec:small_features}}

In this numerical example, we consider the setup of Case 3: Network with small features from the 3D verification benchmark  \cite{flemisch2020verification} to analyze non-matching approximations of an idealized injection/production scenario. To reduce the influence of boundary conditions on the error estimates, we modify the original specification (cf. Section~5.3 of \cite{flemisch2020verification}) and impose no-flux boundary conditions on all boundaries except the South and North faces, where we prescribe a constant Dirichlet condition $p=0$; see the left panel of Fig.~\ref{fig:thin_features}. We then add an injection well on $\Omega_2$ and a production well on $\Omega_4$, modeled as constant source terms on the well cells:
\begin{equation*}
    f_2(\mathbf{x}) = \begin{cases}
        0.1, & \mathbf{x}=(0.5, 0.65, 0.5) \\
        0, & \text{elsewhere}
    \end{cases}
    \quad\text{and}\quad
    f_4(\mathbf{x}) = \begin{cases}
       -0.1, & \mathbf{x}=(0.5, 1.6, 0.675) \\
       0, & \text{elsewhere}
    \end{cases}
    ,
\end{equation*}
where the conditions on $\mathbf{x}$ imply the cell containing said coordinate. Following the benchmark, we assign unitary permeability in the matrix $\mathcal{K}=1$ and $\mathcal{K}=1\times 10^{4}$ in the fractures, while the remaining material parameters are taken from Table~5 of \cite{flemisch2020verification}. The resulting pressure distribution is shown in the right panel of Fig.~\ref{fig:thin_features}.

\begin{figure}
    \centering
    \includegraphics[width=0.99\linewidth]{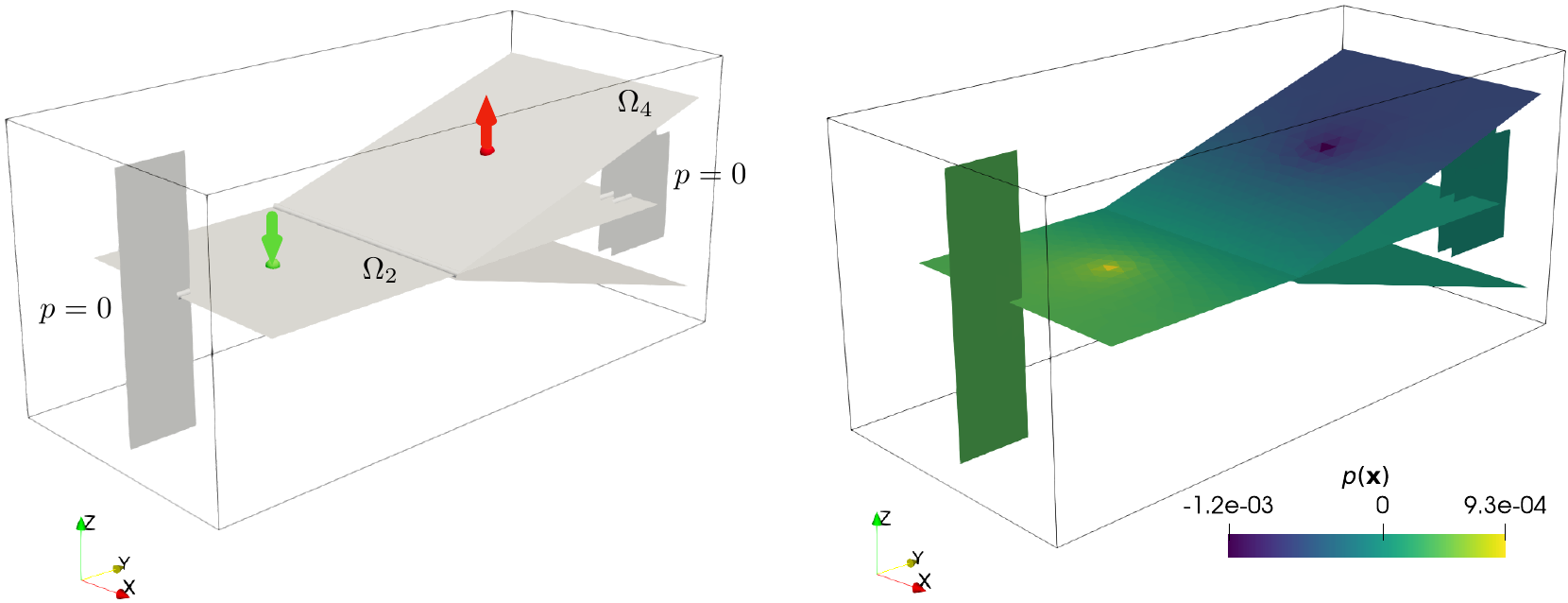}
    
    \caption{Network with small features. Left: Geometry, boundary conditions, and source/sink locations. Right: Resulting pressure distribution (shown only on 2D subdomains for visualization).}
    \label{fig:thin_features}
\end{figure}

In contrast to the previous numerical example, our goal here is not to perform a full convergence study, but to compare matching and non-matching approximations. As before, the non-matching mD grid is constructed by bisecting the matching mD grid and replacing the finer fracture grids into the coarser mD grid. 

\begin{table}[tbp]
    \caption{Majorant and dimension-based aggregated error estimators for the 3D fracture network with small features from Section~\ref{sec:small_features}.}
    \centering
    \begin{tabular}{c c c c c c c c}
        \toprule
        Approximation & $\eta_{\Omega^3}$ & $\eta_{\Omega^2}$ & $\eta_{\Omega^1}$ & $\eta_{\Gamma^2}$ & $\eta_{\Gamma^1}$ & $\mathcal{M}^{\oplus}$\\ 
        \midrule
        Matching & 8.06e-4 & 8.14e-3 & 4.96e-6 & 2.68e-1 & 2.04e-4 & 1.24e-1 \\
        \midrule
        Non-matching & 8.88e-4 & 8.11e-3 & 4.56e-6 & 2.71e-1 & 1.30e-4 & 1.26e-1 \\
        \bottomrule
    \end{tabular}
    \label{tab:thinfeatures3dresults}
\end{table}

Table~\ref{tab:thinfeatures3dresults} shows the aggregated error contributions for both discretizations. The non-matching estimators closely reproduce the matching ones, with differences at most a few percent and a relative deviation of approximately $1.2\%$ in the majorant. The non-matching approximation yields slightly larger contributions in the 3D subdomain and in the 2D interfaces, and slightly smaller contributions in the 2D and 1D subdomains as well as in the 1D interfaces. This illustrates the subtlety of non-matching refinement in mD geometries: refining fractures without fully refining interfaces may reduce errors in some components while increasing them in others. Moreover, the interface grid cannot be refined freely without simultaneously refining its adjacent internal boundary grid, as this may violate inf-sup stability conditions~\cite{boon2018robust, boon2020nonmatching}. Finally, it is important to remark that the residual contributions vanish for both approximations due to the local mass-conservative properties of MPFA~\cite{varela2023posteriori}.

\begin{figure}
    \centering
    \includegraphics[width=0.99\textwidth]{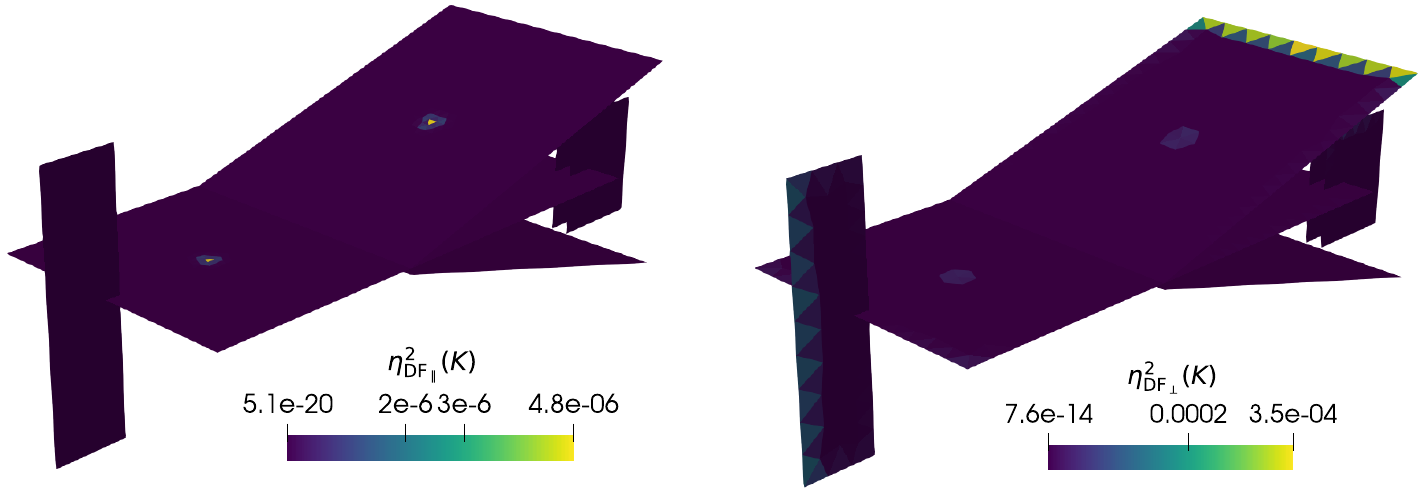}
    \caption{Non-matching local error estimators for the 3D fracture network with small features. Left: 2D diffusive errors on subdomains. Right: 2D diffusive errors on interfaces.}
    \label{fig:small_features_errors}
\end{figure}

Figure~\ref{fig:small_features_errors}  shows the spatial distribution of the 2D subdomain and interface diffusive errors for the non-matching approximation. As expected, the largest subdomain errors occur near the injection and production wells. In contrast, the interface errors are primarily concentrated along the fracture boundaries, which are known to be challenging regions for mD flow discretizations.

\section{Conclusion\label{sec:conclusion}}

In this article, we have extended the \textit{a posteriori} error estimators developed in \cite{varela2023posteriori} for matching grids to the setting of non-matching mD discretizations for elliptic problems, with particular emphasis on single-phase flow in fractured porous media. Using postprocessed MPFA approximations, we verified the validity of the proposed non-matching estimators---and, consequently, of the underlying discrete projection operators---on synthetic test cases with exact solutions as well as on 3D community benchmark problems featuring complex geometries and realistic material-parameter contrasts. The results confirm that the estimators remain reliable under non-matching refinement, while simultaneously highlighting the delicate interplay between accuracy, refinement choices, and stability requirements in mD settings.

\section*{Acknowledgments}

This project has received funding from the European Research Council (ERC) under the European Union’s Horizon 2020 research and innovation programme (grant agreement No 101002507). JV was funded by the Paraguayan National Council of Science and Technology (CONACYT) through the program:  Advanced Human Capital Insertion in Academia (PRIA01-8). CES was funded by CONACYT project number ESTR01-23 and by SISNI. The authors would like to thank Jan M. Nordbotten for valuable discussions on the topic.

\bibliographystyle{unsrtnat}
\bibliography{refs.bib}

\end{document}